\documentclass[11pt]{amsart}
\usepackage[margin=1in]{geometry}
\usepackage{amsmath,amssymb,amsthm,booktabs,microtype}
\usepackage{tikz}
\usepackage[most]{tcolorbox}
\usepackage[colorlinks=true,linkcolor=blue,urlcolor=blue,citecolor=blue,
hypertexnames=false]{hyperref}

\newtheorem{theorem}{Theorem}
\newtheorem{corollary}{Corollary}
\newtheorem{lemma}[theorem]{Lemma}

\newcommand{\Sph}{\mathbb S^2}
\newcommand{\cP}{\mathcal P}

\begin{document}

\title{A set of points with optimal $L^2$ spherical cap discrepancy}

\author{Carlos Beltr\'an, Jordi Marzo and Joaquim Ortega-Cerd\`a}

\address{ Departamento de Matem\'aticas, Estad\'{\i}stica y Computaci\'on, 
Universidad de Can\-ta\-bria, Avd. Los Castros s/n, 39005, Santander, Spain}
\email{beltranc@unican.es}

\address{Departament de Matem\`atiques i Inform\`atica, Universitat de 
Barcelona \& Barcelona Graduate School of Mathematics, Gran Via 585, 08007, 
Barcelona, Spain}
\email{jmarzo@ub.edu}

\address{Departament de Matem\`atiques i Inform\`atica, Universitat de 
Barcelona \& Barcelona Graduate School of Mathematics, Gran Via 585, 08007, 
Barcelona, Spain}
\email{jortega@ub.edu}

\keywords{Discrepancy, Riesz energy, Optimal configurations, Spherical 
harmonics, Sobolev worst case error}
\thanks{The last two authors are supported by grant PID2024-160033NB-I00 funded by MICIU/AEI/10.13039/501100011033 and by FEDER, UE. JOC is also supported by the 2024 ICREA 00142 grant by the Generalitat de Catalunya.}

\begin{abstract}

We introduce an explicit deterministic collection of $N$ spherical points, $\cP_N\subset\mathbb S^2$, that we call the {\em deterministic Diamond points}. For every $0<\alpha<2$ we prove that there exists a constant $C_\alpha>0$ such that 
\[ 0\leq \frac{2^{\alpha+1}}{\alpha+2}N^2-\sum_{x,y\in\cP_N}|x-y|^\alpha \le C_\alpha N^{1-\alpha/2}. \]
This result shows that $\cP_N\subset\mathbb S^2$ has a Riesz energy deficit of optimal order in this range of the parameter. In particular, for $\alpha=1$, Stolarsky's invariance principle implies that $\cP_N\subset\mathbb S^2$ has $L^2$ spherical cap discrepancy of asymptotically optimal order
\[ D_{L^2}^C(\mathcal P_N)\asymp N^{-3/4}, \] 
making it, to our knowledge, the first explicit configuration proved to have this property. More generally, our result implies that the point set $\cP_N$  has Sobolev $H^{s}(\Sph)$ worst-case error of optimal order for all $1<s<2$.
\end{abstract}
\maketitle

\section{Introduction and results}

Many different quantities have been used to measure the regularity of discrete point sets. One of the most popular is the so-called $L^2$ spherical cap discrepancy, defined for a finite set $X\subset \Sph$ by 
\[ D_{L^2}^C(X)=\sqrt{\int_{-1}^{1}\int_{\Sph}\left| \frac{\#(X\cap C(x,t))}{\#X}-\sigma(C(x,t)) \right|^2\, d\sigma(x)\,dt}, \]
where $C(x,t)=\{ y\in \Sph:\ x\cdot y\ge t \}$ is the spherical cap with center $x\in \Sph$ and geodesic radius $\arccos t$, and $\sigma$ denotes the normalized surface area measure on $\Sph$; see \cite{Beck, DT97, BHS19} and the references therein.

Of  particular interest is the asymptotic behavior of the discrepancy when the number of points tends to infinity. It was proved by Stolarsky \cite{Stolarsky}, using previous work by Ralph Alexander \cite{Alexander1972} , that there exists (non-explicit) 
$N$-points sequences $X_N\subset \Sph$ such that
\begin{equation}\label{upper_bound}
D_{L^2}^C(X_N)\le C N^{-3/4}.    
\end{equation}

The main ingredient in Stolarsky's proof is the subsequently called Stolarsky's invariance principle which gives a direct connection between the $L^{2}$ spherical cap discrepancy and the mutual sum of distances for points $x_1,\dots, x_N\in \Sph$ \cite{Stolarsky}:
$$4 D_{L^2}^C (\{ x_1,\dots, x_N \})^2=\int_{\Sph}\int_{\Sph}|x-y|d\sigma(x)d\sigma(y)-
\frac{1}{N^2}\sum_{i=1}^N \sum_{j=1}^N |x_i-x_j|,$$
see \cite{BDM18} for a recent elementary proof.

Improving earlier work by Stolarsky \cite{Stolarsky} and Harman \cite{Har82}, 
the upper bound (\ref{upper_bound}) was actually proved to be sharp by Beck in \cite{Beck} by showing that 
there exists a 
constant $c>0$ such that for all configurations of points
$$c N^{-3/4}\le D_{L^2}^C(X_N).$$
For a recent and elementary proof of this result, see \cite{BB25}.

Several authors have searched for configurations of points with asymptotic discrepancy of optimal order $N^{-3/4}$. For example, the expected discrepancy of jittered sampling and other random processes was proved to be of that order, see \cite{Beck,AZ15,BGM24}. 
It was also proved in \cite{BSSW14,Skr19} that well separated spherical designs of optimal order, whose existence was established in a nonconstructive way in \cite{BRV13,BRV15}, also have this desired property.

However, to date, for all explicit, nonrandom configurations, the known discrepancy bounds are still quite far from optimal, being at best of order $N^{-1/2}$, which is the same order expected for uniform iid chosen points,
see \cite{ABD12,Eta21,FHM23} for theoretical bounds and \cite{HMS} for numerical experiments.

In this paper, we introduce a simple construction that derandomizes the Diamond ensemble introduced in \cite{Diamond} and whose $L^\infty$ discrepancy was studied in \cite{Eta21}. We refer to the resulting configurations as {\em deterministic Diamond points} and prove that their $L^2$ spherical cap discrepancy has optimal asymptotic growth. Our approach is similar to the construction that we introduced together with Uju\'e Etayo in \cite{BEMOC21} and to those of \cite{BL22,LG26}. 
In \cite{BEMOC21}
it was proved that a deterministic and explicit family of points provides an example of the previously elusive set of zeros 
of well-conditioned polynomials defined 
by Shub and Smale, whose existence 
had earlier been established by probabilistic methods \cite{SS93}. The underlying idea of this paper is similar.

A detailed construction of the deterministic Diamond points and some of their properties will be given in the next section. Briefly, we choose 
a particular collection of parallels, symmetric with respect to the equator, and associate with each 
parallel a band centered on it. The number of points assigned to each band 
is chosen to be proportional to the area of the band and the points are placed equally spaced in the central parallel of each band.

To estimate the discrepancy of the deterministic Diamond points, we compare the discrete, semi-discrete (in parallels), and continuous contributions to 
the energy 
associated with the distance kernel. Our technique is valid for the more general problem of estimating the sum of the mutual distances to some power $0<\alpha<2$, which we will see also yields a corollary for Sobolev worst-case errors.


\begin{theorem}\label{thm:main}
Let $\cP_N\subset\mathbb S^2$ be the $N$-set of deterministic Diamond points.  For every $0<\alpha<2$ there exists a constant $C_\alpha>0$ such that
\begin{equation}\label{theo:ineq}
0\leq  I_\alpha N^2-\sum_{x,y\in\cP_N}|x-y|^\alpha
   \le C_\alpha N^{1-\alpha/2},
\end{equation}
where $I_\alpha=\iint_{\Sph\times\Sph}|x-y|^\alpha
 \,d\sigma(x)d\sigma(y)=\frac{2^{\alpha+1}}{\alpha+2}.$
\end{theorem}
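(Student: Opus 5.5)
The lower bound $0\le I_\alpha N^2-\sum|x-y|^\alpha$ holds for every $N$-point set because $|x-y|^\alpha$ is conditionally negative definite on $\Sph$ for $0<\alpha<2$. Expanding $-|x-y|^\alpha$ in Legendre polynomials, all coefficients of degree $n\ge1$ are positive:
\[
-|x-y|^\alpha=-I_\alpha+\sum_{n\ge1}a_n(\alpha)P_n(x\cdot y),\qquad a_n(\alpha)>0,\quad a_n(\alpha)\asymp n^{-1-\alpha}.
\]
Hence
\[
I_\alpha N^2-\sum_{x,y}|x-y|^\alpha=\sum_{n\ge1}a_n\sum_{x,y}P_n(x\cdot y)\ge0 .
\]
The real task is therefore the upper bound. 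I plan to split the deficit into three comparisons: discrete versus semi-discrete, and semi-discrete versus continuous.

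\emph{Setup.} Let the Diamond configuration have parallels at heights $z_1,\dots,z_M$, with $M\asymp\sqrt N$, and $r_j$ equally spaced points on parallel $j$. The number $r_j$ is proportional to the area of the band $B_j$ around that parallel, so $r_j\asymp\sqrt N\sqrt{1-z_j^2}$, up to an $O(1)$ rounding that should be harmless. Write $\mu_j$ for the uniform probability measure on parallel $j$, and define the semi-discrete measure $\nu=\sum_j r_j\mu_j$.

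\emph{Step 1: rotating points within each parallel.} Rotating a parallel's points about the polar axis does not change $\nu$. Averaging over independent rotations of each parallel gives
\[
\sum_{x,y}|x-y|^\alpha=\iint|x-y|^\alpha\,d\nu\,d\nu-\sum_j\Big(r_j^2\iint|x-y|^\alpha d\mu_jd\mu_j-\sum_{x,y\in \text{par}_j}|x-y|^\alpha\Big),
\]
and the cross terms between different parallels are exactly averaged. This is the derandomization step, and it needs care. For a fixed configuration the cross terms are \emph{not} averaged, so I must bound the error of replacing the equispaced points on parallel $i$ by $\mu_i$ when paired with parallel $j$. The function $\phi\mapsto\sum_{y\in j}|x(\phi)-y|^\alpha$ is a sum of translates of a periodic function. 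An equispaced $r_i$-point rule integrates trigonometric polynomials of degree $<r_i$ exactly, and the points on parallel $j$ are themselves equispaced with $r_j$ points. So only Fourier modes that are common multiples of $r_i$ and $r_j$ survive. The Fourier coefficients of $|x-y|^\alpha$ restricted to two parallels at distance $d\asymp|i-j|/\sqrt N$ decay like $e^{-c k d/\rho}$, or polynomially with a gain. I expect the total cross error $\sum_{i\neq j}$ to be $O(N^{1-\alpha/2})$.

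\emph{Step 2: same-parallel terms.} The intra-parallel deficit is a one-dimensional Riemann-sum error for the kernel $(2\rho_j|\sin(\theta/2)|)^\alpha$ with $r_j$ equispaced nodes. It has size $r_j^{1-\alpha}\rho_j^\alpha$ times a constant, because of the $|\theta|^\alpha$ singularity. Since $\rho_j\asymp r_j/\sqrt N$, summing over $j$ gives $\sum_j r_j N^{-\alpha/2}\asymp N^{1-\alpha/2}$, with the correct sign.

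\emph{Step 3: semi-discrete versus continuous.} Compare $\nu$ with $N\sigma=\sum_j N\sigma|_{B_j}$. Write the difference as
\[
I_\alpha N^2-\iint|x-y|^\alpha d\nu d\nu=-\iint |x-y|^\alpha\,d(\nu-N\sigma)\,d(\nu-N\sigma).
\]
This identity uses that $\int|x-y|^\alpha d\sigma(y)=I_\alpha$ is constant and that $\nu$ has total mass $N$. The right side is a sum over pairs of bands $(i,j)$ of energies of the signed measures $\eta_j=r_j\mu_j-N\sigma|_{B_j}$. Each $\eta_j$ is rotation invariant with zero mass, and since the parallel is chosen at the right height within the band, also zero first vertical moment.

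For far bands ($|i-j|\ge2$), I Taylor expand the kernel in the height variables. The vanishing zeroth and first moments kill the low-order terms, leaving $r_ir_j h^4\,\partial^2_{z}\partial^2_{w}$ of the averaged kernel, where $h\asymp N^{-1/2}$. For adjacent and diagonal pairs, I use the crude bound $r_i r_j h^\alpha$. Summing both contributions should again give $O(N^{1-\alpha/2})$.

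\emph{Main obstacle.} I expect the hardest parts to be the control near the poles, where the $\rho_j$ are small and the Taylor derivatives blow up, and above all the exact cancellation of the cross-parallel terms in Step 1. For Step 1 I would first try the Fourier/aliasing argument, combined with choosing the $r_j$ so that the least common multiples of different $r_j$ are large.
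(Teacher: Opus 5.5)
Your architecture is the same as the paper's: the rings measure, the split into a discrete-versus-rings error and a rings-versus-continuous error, and cancellation of the zeroth and first height moments in each band. However, two steps do not work as written. The first gap is Step 1, the heart of the derandomization, which you only \emph{expect} to hold.

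\textbf{Step 1.} The remedy you propose, choosing the $r_j$ so that least common multiples are large, is not available. The construction fixes $r_j=4j$, so $\mathrm{lcm}(r_j,r_{2j})=r_{2j}$. The mirror pairs satisfy $r_{M+m}=r_{M-m}$, so there $\mathrm{lcm}=\gcd=r_{M-m}$ and aliasing gives no gain at all. For small $m$ these two parallels are at distance $\asymp m/M$, comparable to the point spacing, so exponential decay in the separation does not rescue you either. What you need is a Fourier bound uniform in the separation, together with a count of the pairs with large gcd. The paper does this in three steps:
\begin{itemize}
\item It proves $|\widehat G(n)|\lesssim_\alpha B^{\alpha/2}|n|^{-1-\alpha}$ for $G(\theta)=(A-B\cos\theta)^{\alpha/2}$, uniformly in $A\ge B\ge 0$. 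The proof shows that $\delta\mapsto\widehat f_\delta(n)$, with $f_\delta=(1+\delta-\cos\theta)^{\alpha/2}$, increases from the explicitly computable $\widehat f_0(n)\asymp -n^{-1-\alpha}$ to $0$.
\item Discrete Poisson summation then gives, for every pair of parallels, $|r_jr_kF_\alpha(h_j,h_k)-\Sigma_{\alpha,jk}|\lesssim_\alpha M^{-\alpha}\gcd(r_j,r_k)^{1+\alpha}(r_jr_k)^{-\alpha/2}$. This includes the diagonal, so your Step 2 becomes a special case.
\item The sum over pairs is closed by the arithmetic estimate $\sum_{u,v\le 15M}\gcd(u,v)^{1+\alpha}(uv)^{-\alpha/2}\lesssim_\alpha M^2$, using that each value of $r_j$ occurs at most three times.
\end{itemize}

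\textbf{Step 3.} Your crude bound $r_ir_jh^\alpha$ for diagonal and adjacent band pairs sums to $N^{-\alpha/2}\sum_j r_j^2\asymp N^{3/2-\alpha/2}$. This is too large by a factor $\sqrt N$. The true size is $r_jN^{-\alpha/2}$, and to get it you must use the rotational averaging. In $F_\alpha(s,t)=\frac1\pi\int_0^\pi D^\alpha\,d\theta$, only the range $\theta\lesssim 1/r_j$ sees distances $\lesssim N^{-1/2}$. That piece is $\lesssim r_j^{-1}M^{-\alpha}$, hence contributes $r_jM^{-\alpha}$ after multiplying by the total variations $\asymp r_j^2$. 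The remaining range $\theta\ge 1/r_j$ is smooth and is handled by the two-moment Taylor argument.

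For the far pairs, note that the vanishing moments hold in the height variable. There the band widths are $2r_j/N$, not $N^{-1/2}$, and height derivatives carry factors $1/\sin\phi$ that blow up near the poles. The Taylor remainder is therefore $r_j^3r_k^3M^{-8}\sup|\partial_s^2\partial_t^2F_\alpha|$. Its summability rests on derivative bounds that the paper establishes case by case. For comparable scales it proves decay $\ell^{\alpha-3}$ in the index distance $\ell$, which is summable because $\alpha<2$. For unequal scales and for opposite hemispheres it uses the separated-regime bound $U^{\alpha/2-4}$. Your sketch provides none of these, so the claim that everything sums to $O(N^{1-\alpha/2})$ is unsupported even apart from the diagonal issue.
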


The bound in \eqref{theo:ineq} is of optimal order since the classical estimates of Wagner \cite{Wag90,Wag92} give
\begin{equation}\label{eq:optimalorder} 
I_\alpha N^2-\max_{\#X=N} \sum_{x,y\in X}|x-y|^\alpha \asymp N^{1-\alpha/2}.
\end{equation} 
For a simpler proof of the lower bound in the left-hand side of (\ref{eq:optimalorder}), and for a study of the conjectured constant accompanying the term $N^{1-\alpha/2}$ in \eqref{eq:optimalorder}, see \cite{BHS12,BHS19}. We also mention the recent work \cite{Steinerberger2026} for an upper bound attaining the conjectured quadratic coefficient for the exponent $\alpha=-2$, which is not in our range, but whose methods might also lead to sequences with proven low discrepancy points.

Taking $\alpha=1$ in equation \eqref{theo:ineq}, from Stolarsky's invariance principle we immediately get that the deterministic Diamond points have spherical $L^2$ discrepancy of optimal order:

\begin{corollary}\label{cor:main}
Let $\cP_N\subset\mathbb S^2$ be the $N$-set of deterministic Diamond points. There exist constants
$C,c>0$ such that
$$c N^{-3/4}\le D_{L^2}^C(\mathcal P_N)\le C N^{-3/4}.$$
\end{corollary}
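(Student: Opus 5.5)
The statement to prove is Corollary~\ref{cor:main}, and it follows at once from Theorem~\ref{thm:main} with $\alpha=1$ together with Stolarsky's invariance principle and Beck's lower bound.

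\emph{Upper bound.} Set $\alpha=1$. Then $I_1=\frac{2^{2}}{3}=\frac43$, and this is exactly $\iint|x-y|\,d\sigma(x)d\sigma(y)$. Stolarsky's invariance principle gives
\[
4D_{L^2}^C(\cP_N)^2=I_1-\frac{1}{N^2}\sum_{x,y\in\cP_N}|x-y|
=\frac{1}{N^2}\Bigl(I_1N^2-\sum_{x,y\in\cP_N}|x-y|\Bigr).
\]
By \eqref{theo:ineq} with $\alpha=1$, the bracket lies between $0$ and $C_1N^{1/2}$. Hence
\[
0\le 4D_{L^2}^C(\cP_N)^2\le C_1N^{-3/2}.
\]
Taking square roots gives $D_{L^2}^C(\cP_N)\le \tfrac12\sqrt{C_1}\,N^{-3/4}$. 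The left inequality in \eqref{theo:ineq} is what guarantees the quantity inside the square root is nonnegative, although this is automatic here since it equals a squared discrepancy.

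\emph{Lower bound.} Beck's theorem, quoted in the introduction, states that $cN^{-3/4}\le D_{L^2}^C(X_N)$ for every $N$-point configuration $X_N\subset\Sph$. Applying it to $X_N=\cP_N$ gives the left inequality. Equivalently, one can use the lower bound in Wagner's estimate \eqref{eq:optimalorder} with $\alpha=1$, again combined with Stolarsky's identity.

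There is no real obstacle in this step. All the difficulty lies in Theorem~\ref{thm:main}, which the corollary takes as given. The only points to check are that the constant $I_\alpha$ at $\alpha=1$ matches the double integral in Stolarsky's identity, and that the normalization of $\sigma$ as a probability measure is the same in both statements, which it is.
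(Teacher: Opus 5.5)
Your proof is correct and is the paper's argument: you combine Theorem~\ref{thm:main} at $\alpha=1$, where $I_1=4/3$ matches the double integral in Stolarsky's invariance principle, with that principle to get $D_{L^2}^C(\cP_N)\le \tfrac12\sqrt{C_1}\,N^{-3/4}$. The lower bound is Beck's estimate for arbitrary $N$-point sets, which the paper quotes in the introduction, applied to $\cP_N$.
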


Specializing \cite{BSSW14} to $\mathbb S^2$, we now introduce the notion of worst-case error for Sobolev spaces and the relation with our previous results. Given an integer $\ell\ge 0$, let $\mathcal{H}_\ell$ be the $(2\ell+1)$--dimensional vector space of eigenfunctions of the Laplace-Beltrami operator in $\mathbb S^2$, $\Delta,$ with eigenvalue $\lambda_\ell=\ell (\ell+1)$,
\begin{equation*}
-\Delta Y=\lambda_\ell Y,\;\;\;\; Y\in \mathcal{H}_{\ell}.
\end{equation*}
The space $\bigoplus_{\ell=0}^L \mathcal{H}_\ell$ of spherical harmonics of degree at most 
$L$ on $\mathbb{S}^2$ has dimension $(L+1)^2$ and coincides with the space of degree at most $L$ polynomials on $\mathbb R^3$ restricted to $\mathbb S^2$.

Let $L^2(\mathbb{S}^2)$ be the Hilbert space of square integrable real functions in $\mathbb{S}^2$
with the inner product
\[
\langle f,g \rangle=\int_{\mathbb{S}^2} f(x)g(x)\,d\sigma(x),\;\;\; f,g\in L^2(\mathbb{S}^2).
\]
Then $L^2(\mathbb{S}^2)=\bigoplus_{\ell\ge 0} \mathcal{H}_\ell$ and the Fourier series expansion of a function $f \in L^2(\mathbb{S}^2)$ is given by
\begin{equation*}
f=\sum_{\ell,k} f_{\ell,k} Y_{\ell,k},\qquad f_{\ell,k}=\langle f,Y_{\ell,k} \rangle=\int_{\mathbb{S}^2} f\, Y_{\ell,k} \,d\sigma,
\end{equation*}
where $\{ Y_{\ell,k} \}_{k=-\ell}^{\ell}$ is an orthonormal basis of $\mathcal{H}_\ell.$ 

For $s\ge 0,$ the $L^2(\mathbb{S}^2)$-based Sobolev spaces of order $s$ are the Hilbert spaces
$$\mathbb{H}^{s}(\mathbb{S}^2)=\left\{ f\in L^2(\mathbb{S}^2)\;\;:\;\; 
\sum_{\ell=0}^{+\infty}\sum_{k=-\ell}^{\ell} (1+\lambda_\ell)^s |f_{\ell,k}|^2<+\infty \right\},$$
with the norm
\begin{equation}\label{eq:sobolev}\| f \|_{\mathbb{H}^{s}(\mathbb{S}^2)}=\left( \sum_{\ell=0}^{+\infty}\sum_{k=-\ell}^{\ell} (1+\lambda_\ell)^s |f_{\ell,k}|^2  \right)^{1/2}.\end{equation}
The space 
$\mathbb{H}^{s}(\mathbb{S}^2)$ is continuously embedded in the space of continuous functions $\mathcal{C}(\mathbb{S}^2)$ if $s >1.$ Our last result shows that deterministic Diamond points have optimal worst-case error for Sobolev spaces $H^s(\Sph)$ if $1<s<2.$ As in the case of the $L^2$ cap discrepancy, this is, to our knowledge, the first explicit configuration to 
achieve this property.

\begin{corollary}\label{cor:wce}
Let $\cP_N\subset\mathbb S^2$ be the $N$-set of deterministic Diamond points. For $1<s<2$ there exists a constant
$C_s>0$ such that
$$\sup_{\| f \|_{\mathbb{H}^{s}(\mathbb{S}^2)}\le 1}  
\left| \frac{1}{N}\sum_{x\in \cP_N}f(x)-\int_{\mathbb{S}^2}f(x)d\sigma(x) \right| \le C_s N^{-s/2}.$$
\end{corollary}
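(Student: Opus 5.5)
The corollary follows from Theorem~\ref{thm:main} via the standard reproducing-kernel formula for the worst-case error, as in \cite{BSSW14}. For $1<s<2$ set $\alpha=2s-2\in(0,2)$. The key fact, used in \cite{BSSW14} for the sphere, is that $K_s(x,y)=c_0-c_1|x-y|^{\alpha}$, for a suitable constant $c_0$ and $c_1>0$, is a reproducing kernel of $\mathbb H^s(\Sph)$ endowed with a norm equivalent to \eqref{eq:sobolev}. To verify this, expand $|x-y|^{\alpha}=(2-2x\cdot y)^{\alpha/2}$ in Legendre polynomials. The Funk--Hecke coefficients $a_\ell$ satisfy $a_\ell<0$ for $\ell\ge1$ when $0<\alpha<2$. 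They are ratios of Gamma functions of the form $\Gamma(\ell-\alpha/2)/\Gamma(\ell+2+\alpha/2)$, so $|a_\ell|\asymp \ell^{-2-\alpha}=\ell^{-2s}\asymp(1+\lambda_\ell)^{-s}$. Choosing $c_0$ large makes the zeroth coefficient positive. The norm whose Fourier weights are the reciprocals of the coefficients of $K_s$ is then equivalent to $\|\cdot\|_{\mathbb H^s}$, and the worst-case error changes only by a multiplicative constant.

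For a reproducing kernel Hilbert space with kernel $K$, the Riesz representation theorem gives the standard identity
\[
\sup_{\|f\|\le1}\Big|\frac1N\sum_{x\in\cP_N}f(x)-\int f\,d\sigma\Big|^2
=\frac1{N^2}\sum_{x,y\in\cP_N}K(x,y)-\iint K\,d\sigma\,d\sigma ,
\]
since the error functional is represented by $\frac1N\sum_x K(x,\cdot)-\int K(z,\cdot)\,d\sigma(z)$. Substituting $K_s=c_0-c_1|x-y|^\alpha$, the constant $c_0$ cancels and the right-hand side equals
\[
c_1\Big(I_\alpha-\frac1{N^2}\sum_{x,y\in\cP_N}|x-y|^\alpha\Big)\le c_1C_\alpha N^{-1-\alpha/2},
\]
by Theorem~\ref{thm:main}. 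Taking square roots gives the bound $\lesssim N^{-(1+\alpha/2)/2}=N^{-(1+s-1)/2}=N^{-s/2}$. Accounting for the norm equivalence constant, this yields the corollary.

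The only real obstacle is the first step: checking the sign and the exact decay $\ell^{-2-\alpha}$ of the Legendre coefficients of $(2-2t)^{\alpha/2}$ for all $0<\alpha<2$. We would compute them via the Rodrigues formula, or simply cite the formula in \cite{BSSW14}, where exactly this identification of the worst-case error in $\mathbb H^s$, $1<s<2$, with the Riesz $\alpha$-energy deficit for $\alpha=2s-2$ is carried out.
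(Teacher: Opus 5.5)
Your proposal is correct and follows the paper's route: the paper obtains the corollary directly from Theorem~\ref{thm:main} with $\alpha=2s-2$, combined with the relation \cite[(42)]{BSSW14}, which bounds the squared $\mathbb H^s$ worst-case error by $C_s\bigl(I_{2s-2}-N^{-2}\sum_{i,j}|x_i-x_j|^{2s-2}\bigr)$. Your reproducing-kernel argument with $K_s=c_0-c_1|x-y|^{\alpha}$ (negative Legendre coefficients of size $\asymp\ell^{-2s}$, followed by norm equivalence) only sketches where that cited relation comes from, and your exponent bookkeeping $N^{-(1+\alpha/2)/2}=N^{-s/2}$ is right.
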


The asymptotic behavior in the corollary above is optimal because 
there is a lower bound for the worst-case error of the same order, see  \cite{BSSW14}. Corollary \ref{cor:wce} follows immediately from Theorem \ref{thm:main} and the following relation proved in \cite[(42)]{BSSW14} for $1<s<2$ (note that \cite{BSSW14} uses several norms in $\mathbb{H}^{s}(\mathbb{S}^2)$, but they are all equivalent to that of \eqref{eq:sobolev}): for some constant $C_s>0$,
$$\sup_{\| f \|_{\mathbb{H}^{s}(\mathbb{S}^2)}\le 1}  
\left| \frac{1}{N}\sum_{x\in X_N}f(x)-\int_{\mathbb{S}^2}f(x)d\sigma(x) \right|^2\leq C_s\left(
I_{2s-2}-\frac{1}{N^2}\sum_{i=1}^N\sum_{j=1}^N|x_i-x_j|^{2s-2}\right).$$

\section{Notation and construction of the deterministic Diamond points}
We will use the asymptotic notation: for two collections of numbers $A(\eta,\nu,\ldots)$ and $B(\eta,\nu,\ldots)$, where $\eta,\nu,\ldots$ are any parameters, the expression
\[
A(\eta,\nu,\ldots)\asymp B(\eta,\nu,\ldots)
\]
means that there exist universal constants $0<c<C$ such that
\[
c A(\eta,\nu,\ldots)\leq B(\eta,\nu,\ldots)\leq CA(\eta,\nu,\ldots),\quad \text{for all choices of the parameters.}
\]
If we only have  the upper bound $B(\eta,\nu,\ldots)\leq CA(\eta,\nu,\ldots)$ then we write
\[
B(\eta,\nu,\ldots)\lesssim A(\eta,\nu,\ldots)
\]
The symbols $\asymp_\eta$ and $\lesssim_\eta$ are used if the constants depend on the parameter $\eta$.

The  unspecified constants in Theorem \ref{thm:main} allow us to assume in the proof that $N\geq N_0$ for any fixed positive integer $N_0$. We now give the construction of our point set for every $N\geq4$, although the proof will assume $N\geq 1024$, that implies $M\geq16$ in \eqref{eq:rj} below. 
The construction is similar to that of \cite[Section 4.1]{Diamond} or \cite[Section 3]{LG26} but excludes the poles and avoids the choice of random phases. Fix
$N\ge4$ and put
\begin{equation}\label{eq:rj}
 M=\left\lfloor\sqrt{N/4}\right\rfloor,\qquad
 r_j=4j\quad(1\le j<M),\qquad
 r_M=N-2\sum_{j=1}^{M-1}r_j .
\end{equation}

Note that
\[
4M^2\leq N\leq 4(M+1)^2-1=4M^2+8M+3,\text{ and then }N\asymp M^{2}.
\]
Extend symmetrically by $r_{M+j}=r_{M-j}$ for $1\le j<M$.

We denote the parallels in $\Sph$ by 
$$Q_{h}=\{(x,y,z)\in \Sph : z=h \},\;\;\;-1\leq h \leq 1.$$ 
Let
\[
H_j=1-\frac{2}{N}\sum_{k=1}^j r_k\quad 0\leq j\leq 2M-1,
\]
which define the bands
$$\mathcal B_j=\{ (x,y,z)\in \Sph \;\:\; H_j\le z\le H_{j-1} \},$$
where $\mathcal B_1,\mathcal B_{2M-1}$ are spherical caps. Then $\Sph=\bigcup_{j=1}^{2M-1} \mathcal B_j$ and 
\[
\sigma(\mathcal B_j)=\frac{H_{j-1}-H_j}{2}=\frac{r_j}{N},\quad 1\leq j\leq 2M-1.
\] 
The parallels bounding each band $\mathcal B_j$ are precisely $Q_{H_{j-1}}$ and $Q_{H_{j}}$. We consider also the central parallels $Q_{h_j}$ with heights
\[
h_j=\frac{H_{j-1}+H_j}{2}=H_{j-1}-\frac{r_j}{N}=H_{j}+\frac{r_j}{N}=1-\frac{2}{N}\sum_{k=1}^{j-1}
r_k 
-\frac{r_j}{N},
\]
for $1\leq j\leq 2M-1,$ and observe that $h_M=0$ and $h_{M+j}=-h_{M-j}$ for $j=1,\dots, M-1.$

 In order to construct $\cP_N$, place $r_j$ equally spaced points in each parallel of height $h_j$. Hence, these points are the $r_j$ vertices of a regular polygon, equivalently, up to rotation and scaling, they form a set of roots of unity in the circumference defined by the
parallel. The points can be rotated in any desired way, or forced to have one of them in any predefined meridian. In order to fix some criteria, we determine the set completely by forcing that in each parallel there exists one point such that its first coordinate is positive and its second coordinate vanishes, but we point that this choice is not important for the proof below, and hence our main result and corollaries also hold for any arbitrary rotation of the points in each parallel.

For later use, observe that
\begin{align*}
h_j=&1-\frac{4j^2}N,\quad 1\leq j\leq M-1,\\
 \sum_{j=1}^{M-1}r_j
 =&4\frac{(M-1)M}{2}=2M(M-1),\\
 r_M&=N-4M(M-1),
\end{align*}
and hence $4M\le r_M\le12M+3.$ Thus the number of points in the equator is of order $M$, and the total number of points is $N$. There are a total of $2M-1$ occupied parallels. See Figure \ref{fig:bemoc} for a graphical representation of the  points.

Each parallel $Q_j=Q_{h_j}$ is a circumference of radius $\rho_j=\sqrt{1-h_j^2}$. Note that


\begin{figure}[t]
\centering
\begin{tikzpicture}[scale=1.0,line cap=round,line join=round]
  \def\R{2.25}
  \draw[thick] (0,0) circle (\R);
  \draw[densely dashed,gray] (0,-\R) -- (0,\R);
  \foreach \y/\rx/\step/\shift in
   {1.55/1.63/90/0,.84/2.09/45/0,.15/2.24/30/0,
    -.61/2.16/45/15,-1.36/1.78/60/30}{
    \draw[blue!55!black] (0,\y) ellipse [x radius=\rx,y radius=.23];
    \foreach \a in {0,\step,...,359}
      \fill[blue!70!black]
       ({\rx*cos(\a+\shift)},{\y+.23*sin(\a+\shift)}) circle (1.25pt);
  }
  \node[align=left,anchor=west] at (2.65,.80)
    {equally spaced points\\on occupied parallels};
  \draw[->] (2.61,.62)--(1.72,.57);
  \node[align=left,anchor=west] at (2.65,-1.05)
    {populations increase\\toward the equator};
  \draw[->] (2.61,-.88)--(1.68,-.57);
\end{tikzpicture}
\caption{Schematic deterministic Diamond configuration.  There are $O(\sqrt N)$ occupied
parallels, and the phases of each polygon can be chosen arbitrarily or fixed to any desired standard.}
\label{fig:bemoc}
\end{figure}

\begin{equation}\label{eq:geometry}
 r_j\asymp M\rho_j,\;1\leq j\leq 2M-1,\quad  \text{ and  } \quad \sum_{j=1}^{2M-1}\rho_j\asymp M.                    
\end{equation}

\section{Proof of Theorem \ref{thm:main}}


We will use the following  well known integral formula, valid for any integrable function $f:\mathbb S^2\to\mathbb R$:
\begin{equation}\label{eq:integralaintervalo}\int_{\mathbb S^2}f(x)d\sigma(x)=\frac{1}{2}\int_{-1}^1 \frac{1}{2\pi}\int_0^{2\pi} f(\sqrt{1-t^2}\cos \theta,\sqrt{1-t^2}\sin \theta,t)d\theta dt.
\end{equation}
We need to bound the LHS in \eqref{theo:ineq}, which is the error between the Riesz energy of the measure $N\sigma$ and the Riesz energy of the fully discrete measure $\sum_{x\in\mathcal P_N}\delta_x.$

For the parallel with height $z$ we denote as $U_z$ the uniform probability measure on the circle at height $z$ and define $\nu=\sum_jr_jU_{h_j}.$ Then its energy is
\begin{equation}\label{eq:energianu}E_\alpha[\nu]=\int_{\Sph}\int_{\Sph}|x-y|^\alpha d\nu(x)d\nu(y)=\sum_{j,k}r_jr_k
F_\alpha(h_j,h_k)\end{equation}
where
\begin{equation}\label{eq:Falpha}
 F_\alpha(s,t)=\int_{\Sph}\int_{\Sph}|x-y|^\alpha d U_{s}(x)d U_{t}(y)=\frac1{2\pi}\int_0^{2\pi}
 \left(2-2st-2\sqrt{1-s^2}\sqrt{1-t^2}\cos\theta
 \right)^{\alpha/2}d\theta.
\end{equation}

Given any two (possibly equal) parallels $Q_j$ and $Q_k$ in our construction, write
$$\Sigma_{\alpha,jk}=
\sum_{x \in \mathcal P_N\cap Q_{j}}\sum_{y \in \mathcal P_N\cap Q_{k}}|x-y|^\alpha,\;\;\mbox{so that}\;\;
 \sum_{x,y\in \mathcal{P}_N}|x-y|^\alpha
 =\sum_{j,k}\Sigma_{\alpha,jk}.$$
We then have the decomposition
\begin{equation}\label{eq:decomp}
\text{LHS in \eqref{theo:ineq}}=I_\alpha N^2-\sum_{x,y\in \mathcal{P}_N}|x-y|^\alpha
 =A_{\alpha,N}+B_{\alpha,N},             
\end{equation}
where
\begin{align*}
 A_{\alpha,N}&=I_\alpha N^2-E_\alpha[\nu],\\
 B_{\alpha,N}&=\sum_{j,k}\left(r_jr_kF_\alpha(h_j,h_k)
                    -\Sigma_{\alpha,jk}\right).
\end{align*}

\noindent Observe that $|x-y|^\alpha$ is strictly
conditionally negative definite for $0<\alpha<2$ (see for example \cite[Th. 4.4.7]{BHS19}).  Thus, for every signed
measure $\eta$ of total mass zero,
\[
 \iint |x-y|^\alpha\,d\eta(x)d\eta(y)\le0.
\]
Applying this with $\eta=\nu-N\sigma$ gives $A_{\alpha,N}\ge0$; the identical computation
with the atomic point measure gives that the LHS in \eqref{theo:ineq} is also nonnegative.

In subsequent sections we prove that both $A_{\alpha,N}$ and $B_{\alpha,N}$ in  
the decomposition
(\ref{eq:decomp}) are bounded above by a constant depending on $\alpha$ times $N^{1-\alpha/2}$, see lemmas \ref{lem:BalphaN} and \ref{lem:latitude}, which proves Theorem \ref{thm:main}.

\subsection{The bound of $B_{\alpha,N}$}

In this section, we prove the following result.
\begin{lemma}\label{lem:BalphaN}
For $0<\alpha<2$,
\begin{equation*}
    B_{\alpha,N}\lesssim_\alpha N^{1-\alpha/2}
\end{equation*}
\end{lemma}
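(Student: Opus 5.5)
We bound $B_{\alpha,N}$ one pair of parallels at a time. For $x\in Q_j$ fixed, the inner sum over the $r_k$ equally spaced points of $Q_k$ is a Riemann sum of a $2\pi$-periodic function of the angle. We compare it with the average over the circle, which is exactly $F_\alpha(h_j,h_k)$. Write $x=(\rho_j\cos\phi,\rho_j\sin\phi,h_j)$ and
\[
g_{jk}(\theta)=\bigl(2-2h_jh_k-2\rho_j\rho_k\cos\theta\bigr)^{\alpha/2}=(a-b\cos\theta)^{\alpha/2},
\qquad a=2-2h_jh_k,\ b=2\rho_j\rho_k .
\]
Then
\[
\Sigma_{\alpha,jk}=\sum_{x\in Q_j}\sum_{m=0}^{r_k-1} g_{jk}\Bigl(\phi_x-\psi_k-\tfrac{2\pi m}{r_k}\Bigr).
\]
The Fourier expansion $g_{jk}(\theta)=\sum_n \hat g_{jk}(n)e^{in\theta}$ gives exact aliasing: the average over $r_k$ equally spaced nodes equals $\sum_{n\in r_k\mathbb Z}\hat g_{jk}(n)e^{in(\cdot)}$. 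We then also sum over the $r_j$ equally spaced points of $Q_j$, whose average kills every frequency not divisible by $r_j$. This yields
\[
r_jr_kF_\alpha(h_j,h_k)-\Sigma_{\alpha,jk}=-r_jr_k\sum_{0\neq n\in \operatorname{lcm}(r_j,r_k)\mathbb Z}\hat g_{jk}(n)e^{in c_{jk}}.
\]
Hence it suffices to bound $r_jr_k\sum_{|n|\ge L_{jk}}|\hat g_{jk}(n)|$ with $L_{jk}=\operatorname{lcm}(r_j,r_k)\ge\max(r_j,r_k)$. This also explains why the rotations are irrelevant.

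The main step is a Fourier-decay estimate for $g=(a-b\cos\theta)^{\alpha/2}$. Note that $a-b\ge|x-y|^2_{\min}\asymp(h_j-h_k)^2+(\rho_j-\rho_k)^2$, which is essentially the squared distance between the parallels. We have $a-b=0$ only when $j=k$, in which case $g=(2\rho_j^2)^{\alpha/2}|2\sin(\theta/2)|^\alpha$. I would first treat the diagonal, where classical results give $\hat g(n)\asymp_\alpha \rho_j^\alpha |n|^{-1-\alpha}$. Thus the tail beyond $r_j$ is $\lesssim\rho_j^\alpha r_j^{-\alpha}\asymp M^{-\alpha}$, using $r_j\asymp M\rho_j$ from \eqref{eq:geometry}. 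The diagonal then contributes $\sum_j r_j^2M^{-\alpha}\lesssim M^{-\alpha}\cdot M\sum_j r_j\lesssim M^{3-\alpha}$.

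This bound is too large: we need $N^{1-\alpha/2}\asymp M^{2-\alpha}$. We must use instead that the diagonal term is a self-energy of a regular polygon. Summing over the $r_j$ points of $Q_j$ gives exactly $r_j\sum_{m}|2\rho_j\sin(\pi m/r_j)|^\alpha$, whose deviation from $r_j^2\cdot$(average) is $O(r_j\rho_j^\alpha r_j^{-\alpha}\cdot r_j^{0})$. More precisely, by the aliasing formula with $L=r_j$ and $n$ ranging over multiples of $r_j$, it is $r_j^2\sum_{l\ne0}|\hat g(lr_j)|\lesssim r_j^2\rho_j^\alpha r_j^{-1-\alpha}=r_j^{1-\alpha}\rho_j^\alpha\asymp r_jM^{-\alpha}$. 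Note that only multiples of $r_j$ appear, not all $|n|\ge r_j$, and this saves a factor $r_j$. Summing over $j$ gives $\sum_j r_jM^{-\alpha}=NM^{-\alpha}\asymp M^{2-\alpha}$, as required.

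For off-diagonal pairs the same restriction to multiples of $L_{jk}$ holds. For $g$ smooth with $a-b=\delta^2>0$, I would prove $|\hat g(n)|\lesssim_\alpha b^{\alpha/2}|n|^{-1-\alpha}e^{-c|n|\delta/\sqrt b}$. This follows by writing $a-b\cos\theta=(a-b)+2b\sin^2(\theta/2)$, noting that $g$ is analytic in a strip of width $\asymp\delta/\sqrt b$, and shifting the contour. Consequently the $(j,k)$ error is $\lesssim r_jr_k\,b^{\alpha/2}L_{jk}^{-1-\alpha}e^{-cL_{jk}\delta_{jk}/\sqrt b}$.

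The remaining task, which I expect to be the main obstacle, is summing these bounds over $j\ne k$. For neighbouring parallels, $\delta_{jk}\asymp|j-k|/\sqrt N\cdot$(local scale) and $L\gtrsim M\rho$. Since $\sqrt b\asymp\rho$, the exponent is $\gtrsim|j-k|$, which gives geometric decay in $|j-k|$. So each row $j$ contributes $\lesssim r_j^2\rho_j^\alpha (M\rho_j)^{-1-\alpha}\asymp r_jM^{-\alpha}$, and the total is again $\lesssim NM^{-\alpha}\asymp N^{1-\alpha/2}$. Care is needed near the poles, where $\rho_j$ is small and the parallel spacing in $h$ is not uniform. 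There I would use the explicit $h_j=1-4j^2/N$, $\rho_j\asymp j/M$, and check that $\delta_{jk}/\rho\gtrsim|j-k|/\max(j,k)$ still combines with $L\ge\max(r_j,r_k)\asymp\max(j,k)$ to give decay in $|j-k|$. The equatorial band $r_M$ is not exactly $4M$, but it is comparable, so the same estimates apply.
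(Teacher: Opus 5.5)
There is a genuine gap, in the off-diagonal part. Your aliasing identity is exactly the paper's (with $r_jr_k=dL$), and your diagonal estimate is correct.

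\textbf{The off-diagonal step fails as written.} You weaken $L_{jk}=\operatorname{lcm}(r_j,r_k)$ to $L_{jk}\ge\max(r_j,r_k)$. With only the decay $b^{\alpha/2}|n|^{-1-\alpha}$, this bounds the pair sum by $M^{3-\alpha}$, not $M^{2-\alpha}$: each of the $\asymp r_j$ parallels $k$ with $r_k\asymp r_j$ is only bounded by $M^{-\alpha}r_j$. So everything rests on your exponential Fourier bound, and that bound is false as stated.
\begin{itemize}
\item The zeros of $a-b\cos\theta$ lie at $\mathrm{Im}\,\theta=\pm\operatorname{arccosh}(a/b)$. This is $\asymp\delta/\sqrt b$ only when $\delta\lesssim\sqrt b$. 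When $\delta\gg\sqrt b$ the strip has width only $\asymp\log(a/b)$; this happens, for instance, for $j=1$ against the equator, where $\delta/\sqrt b\asymp\sqrt M$.
\item Concretely, for fixed $n$ and $b/a\to0$ one has $\hat g(n)\approx a^{\alpha/2}\binom{\alpha/2}{|n|}(-b/2a)^{|n|}$. This is only polynomially small in $b/a$, hence much larger than $b^{\alpha/2}e^{-c|n|\sqrt{(a-b)/b}}$. At best the rate is $\min(1,\delta/\sqrt b)$.
\item A contour shift gives $e^{-|n|\eta}\sup|g|$, not its product with $b^{\alpha/2}|n|^{-1-\alpha}$. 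For $|j-k|=O(1)$ the exponent $\max(r_j,r_k)\,\delta/\sqrt b$ is $O(1)$, so you need both factors at once. Taking the minimum of two separate bounds only gives $M^{2-\alpha}\log M$.
\end{itemize}
A joint bound $b^{\alpha/2}|n|^{-1-\alpha}e^{-c|n|\min(1,\delta/\sqrt b)}$ does hold, but it needs a finer (Bessel-type) analysis. Your summation over all pairs is also not carried out. With the corrected bound, $r_j\asymp M\rho_j$ and $\delta\asymp|j-k|/M$ give an exponent $\gtrsim\min(|j-k|,\max(r_j,r_k))$, which would suffice.

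\textbf{The missing idea is the one you already used on the diagonal: keep the full lcm.} Set $d=\gcd(r_j,r_k)$ and $L=r_jr_k/d$. Then the uniform bound $|\hat g(n)|\lesssim_\alpha b^{\alpha/2}|n|^{-1-\alpha}$, valid for all $a\ge b$, is enough. The paper proves this bound by showing that the nonzero Fourier coefficients of $(1+\epsilon-\cos\theta)^{\alpha/2}$ are nonpositive and nondecreasing in $\epsilon\ge0$. Hence the case $\epsilon=0$ (i.e.\ $j=k$, your classical formula) dominates.

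Each pair then contributes $\lesssim d\,b^{\alpha/2}L^{-\alpha}\lesssim_\alpha M^{-\alpha}d^{1+\alpha}(r_jr_k)^{-\alpha/2}$. Each value of $r$ occurs on at most three parallels and $r_j\le15M$, so it remains to bound $\sum_{u,v\le15M}\gcd(u,v)^{1+\alpha}(uv)^{-\alpha/2}$. Writing $u=da'$, $v=db'$ with $\gcd(a',b')=1$ bounds this by a constant times $M^2\zeta(1+\alpha/2)^2$.

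For instance, consecutive parallels $r_j=4j$, $r_{j+1}=4j+4$ have $L=4j(j+1)$ rather than $\asymp j$. This arithmetic gain replaces all the separation geometry and exponential decay your route requires.
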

We will use the following auxiliary result.
\begin{lemma}\label{lem:trap}
Let $0<\alpha<2$ and $G(\theta)=(A-B\cos\theta)^{\alpha/2}$ with $A\ge B\ge0$.  For all
integers $L\ge1$ and all phases $\phi$,
\[
 \left|\frac1L\sum_{k=0}^{L-1}G(\phi+2\pi k/L)
 -\frac1{2\pi}\int_0^{2\pi}G\right|
 \lesssim_\alpha B^{\alpha/2}L^{-1-\alpha}.             
\]
\end{lemma}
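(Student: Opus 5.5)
The plan is to pass to Fourier series. Write $G(\theta)=\sum_{n\in\mathbb Z}\widehat G(n)e^{in\theta}$; this series converges absolutely, as will follow from the decay established below. Summing over the $L$ equally spaced nodes kills every frequency that is not a multiple of $L$, so the quadrature error is exactly
\[
\frac1L\sum_{k=0}^{L-1}G(\phi+2\pi k/L)-\frac1{2\pi}\int_0^{2\pi}G=\sum_{m\neq0}\widehat G(mL)\,e^{imL\phi}.
\]
It therefore suffices to prove the uniform coefficient bound $|\widehat G(n)|\lesssim_\alpha B^{\alpha/2}|n|^{-1-\alpha}$ for $n\neq0$. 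Summing it over $n=mL$ gives $B^{\alpha/2}L^{-1-\alpha}\cdot 2\zeta(1+\alpha)$, which is the claim. The degenerate case $B=0$ is trivial, so assume $B>0$ and set $a=A/B\ge1$, so that $G=B^{\alpha/2}g_a$ with $g_a(\theta)=(a-\cos\theta)^{\alpha/2}$. $G$ is even and $2\pi$-periodic.

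I would split into two cases according to the size of $a$.

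\textbf{Case $a\ge2$.} Here $G$ is smooth, and I will use three integrations by parts. One has $A-B\cos\theta\ge A/2$. Differentiating, each derivative brings out at most a factor $B\,(A-B\cos\theta)^{-1}\lesssim B/A\le 1$ relative to the previous size, and the very first derivative already carries one factor $B$. Hence $|G'''|\lesssim_\alpha A^{\alpha/2-1}B=B^{\alpha/2}(B/A)^{1-\alpha/2}\le B^{\alpha/2}$, using $\alpha<2$. This yields $|\widehat G(n)|\lesssim B^{\alpha/2}|n|^{-3}$, which is stronger than needed.

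\textbf{Case $1\le a\le2$.} This is the main case and the only genuinely singular one, since at $a=1$ the function behaves like $|\theta|^{\alpha}$ near $0$. The key pointwise estimate, uniform in $a\in[1,2]$, is
\[
|g_a^{(k)}(\theta)|\lesssim_{\alpha,k}|\theta|^{\alpha-k}\qquad\text{for }0<|\theta|\le\pi,\ k=1,2,3.
\]
This follows because $g_a^{(k)}$ is a finite sum of terms of the form $(a-\cos\theta)^{\alpha/2-j}\sin^i\theta\cos^l\theta$ with $1\le j\le k$ and $i\ge 2j-k$. Then one uses $a-\cos\theta\ge1-\cos\theta\gtrsim\theta^2$ together with the negativity of $\alpha/2-j$, and $|\sin\theta|\le|\theta|$; the remaining trigonometric factors are bounded by $1$.

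With this estimate in hand, I integrate by parts twice (legitimate, since $g_a'\in L^1$ and $g_a'$ is continuous) to get $\widehat{g_a}(n)=-n^{-2}\widehat{g_a''}(n)$. Then I split $\int g_a''(\theta)e^{-in\theta}\,d\theta$ into two regions.
\begin{itemize}
\item On $|\theta|<1/|n|$, the bound $|g_a''|\lesssim|\theta|^{\alpha-2}$ is integrable because $\alpha>1$ is not needed: $\alpha-2>-1$. It contributes $\lesssim |n|^{1-\alpha}$.
\item On $1/|n|\le|\theta|\le\pi$, I integrate by parts once more. The boundary terms are $\lesssim |n|^{-1}|g_a''(1/n)|\lesssim |n|^{1-\alpha}$. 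The remaining integral is $\lesssim|n|^{-1}\int_{1/|n|}^{\pi}\theta^{\alpha-3}\,d\theta\lesssim |n|^{1-\alpha}$.
\end{itemize}
Altogether $|\widehat{g_a}(n)|\lesssim_\alpha|n|^{-1-\alpha}$ uniformly in $a\in[1,2]$, and multiplying by $B^{\alpha/2}$ gives the required coefficient bound.

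I expect the main obstacle to be the uniformity in $a$ of the derivative bounds, that is, organizing the terms of $g_a^{(k)}$ so that the worst case $a=1$ controls everything. The split at $a=2$ is designed to avoid the opposite problem: for large $a$ the prefactor $A^{\alpha/2}$ is not bounded by $B^{\alpha/2}$, and it is only the gain of a factor $B$ per derivative that rescues that case.
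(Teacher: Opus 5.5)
You use the same aliasing (discrete Poisson) reduction as the paper, and your case $a\ge2$ is fine. The singular case $1\le a\le2$, however, only works for $1<\alpha<2$. Your step ``$|g_a''|\lesssim|\theta|^{\alpha-2}$ is integrable because \dots\ $\alpha-2>-1$'' contradicts itself: $\alpha-2>-1$ is exactly $\alpha>1$, and for $0<\alpha\le1$ one has $\int_{|\theta|<1/|n|}|\theta|^{\alpha-2}\,d\theta=\infty$. For $\alpha\le1$ the identity $\widehat{g_a}(n)=-n^{-2}\widehat{g_a''}(n)$ also fails at $a=1$. Near $0$, $g_1'(\theta)\approx c_\alpha\,\mathrm{sgn}(\theta)|\theta|^{\alpha-1}$, which is unbounded for $\alpha<1$, so $g_1''\notin L^1$. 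For $\alpha=1$, $g_1(\theta)=\sqrt2\,|\sin(\theta/2)|$ and $g_1'$ jumps at $0$, so the pointwise $g_1''$ misses a point mass. For $a$ slightly above $1$ the identity holds, but when $\alpha<1$ no absolute-value bound on the inner region can be uniform in $a$. For small $c$ one has $|g_a''|\gtrsim(a-1)^{\alpha/2-1}$ on $|\theta|\le c\sqrt{a-1}$, hence $\int_{|\theta|<1/|n|}|g_a''|\,d\theta\gtrsim(a-1)^{(\alpha-1)/2}\to\infty$ as $a\to1^+$. The missing range contains $\alpha=1$, the case behind the $L^2$ cap discrepancy result, so this is a real gap. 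The obstacle you anticipated, uniformity in $a$ of the pointwise bounds, is actually harmless.

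The repair uses only the pointwise bounds you already have; the fix is to split at the level of $g_a'$ rather than $g_a''$.
\begin{itemize}
\item Integrate by parts only once globally. Since $g_a$ is absolutely continuous with $|g_a'|\lesssim|\theta|^{\alpha-1}\in L^1$ for every $\alpha>0$, you get $\widehat{g_a}(n)=(in)^{-1}\widehat{g_a'}(n)$.
\item Split $\widehat{g_a'}(n)$ at $|\theta|=1/|n|$. The inner piece is $\lesssim\int_0^{1/|n|}\theta^{\alpha-1}\,d\theta\lesssim_\alpha|n|^{-\alpha}$.
\item On $1/|n|\le|\theta|\le\pi$, integrate by parts twice; once suffices only for $\alpha<1$, and at $\alpha=1$ it loses a factor $\log|n|$. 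The boundary terms at $\pm\pi$ cancel by periodicity. Those at $\pm1/|n|$ are $\lesssim|n|^{-1}|g_a'(\pm1/n)|+|n|^{-2}|g_a''(\pm1/n)|\lesssim|n|^{-\alpha}$. The remainder is $\lesssim|n|^{-2}\int_{1/|n|}^{\pi}\theta^{\alpha-3}\,d\theta\lesssim_\alpha|n|^{-\alpha}$.
\end{itemize}
This gives $|\widehat{g_a}(n)|\lesssim_\alpha|n|^{-1-\alpha}$ for all $0<\alpha<2$, uniformly in $a\ge1$. Your pointwise bounds hold for every $a\ge1$, since all exponents $\alpha/2-j$ are negative, so the split at $a=2$ is not even needed.

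Once repaired, your argument is a legitimate real-variable alternative to the paper's proof. The paper avoids derivative estimates entirely. It shows that the nonzero Fourier coefficients of $(1+\delta-\cos\theta)^{\alpha/2}$ are negative and increasing in $\delta$. It then computes them exactly at $\delta=0$ via a beta integral and the reflection formula, which gives the $|n|^{-1-\alpha}$ decay with explicit constants.
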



\begin{proof}
See Appendix \ref{appendix1}.
\end{proof}

\begin{proof}[Proof of Lemma \ref{lem:BalphaN}]
    
We apply the triangle inequality and estimate each summand 
\[ \left|r_jr_kF_\alpha(h_j,h_k) -\Sigma_{\alpha,jk}\right| \] 
separately. Recall that at the parallel $Q_j$ there are $r_j$ points of $\mathcal P_N$ lying on a circle of radius $\rho_j$. The angular coordinates of these points can be written as \[ \theta_{j,\ell} = \theta_{j,0}+\frac{2\pi \ell}{r_j}, \qquad \ell=0,\dots,r_j-1, \] for some (unimportant) phase $\theta_{j,0}\in[0,2\pi)$. Define 
\begin{equation*}
G(\theta) = (A-B\cos\theta)^{\alpha/2}, \qquad A=2-2h_jh_k, \qquad B=2\rho_j\rho_k. 
\end{equation*} 
Observe that $A\ge B\ge 0$ and there is  equality $A=B$ if and only if $j=k.$

Then the value of $|x-y|^\alpha$ for two points $x,y\in\mathcal P_N$ lying on two fixed parallels $Q_j$ and $Q_k$ is $G(\theta_{j,\ell}-\theta_{k,\ell'})$ for some $\ell\in\{0,\dots,r_j-1\}$ and $\ell'\in\{0,\dots,r_k-1\}$. Therefore, 
\begin{equation*} \Sigma_{\alpha,jk} = \sum_{\ell=0}^{r_j-1}\sum_{\ell'=0}^{r_k-1} G(\theta_{j,\ell}-\theta_{k,\ell'}) 
= \sum_{\ell=0}^{r_j-1}\sum_{\ell'=0}^{r_k-1} G\left( \theta_{j,0}-\theta_{k,0} + 2\pi\left(\frac{\ell}{r_j}-\frac{\ell'}{r_k}\right) \right). 
\end{equation*}
Let 
\[ L=\mathrm{lcm}(r_j,r_k) \qquad\text{and}\qquad d=\mathrm{gcd}(r_j,r_k) \]
denote, respectively, the least common multiple and the greatest common divisor of $r_j$ and $r_k$. Among the $r_jr_k$ angles 
$2\pi\left(\frac{\ell}{r_j}-\frac{\ell'}{r_k}\right)$
appearing in the sum above, only $L$ are distinct modulo $2\pi$, and each of them occurs exactly $d$ times. Hence, 
\[ 
\Sigma_{\alpha,jk} = d\sum_{u=0}^{L-1} G\left(\phi+\frac{2\pi u}{L}\right), \qquad \phi=\theta_{j,0}-\theta_{k,0}. \]
Since $r_jr_k=dL$ and 
\[ F_\alpha(h_j,h_k) = \frac{1}{2\pi}\int_0^{2\pi}G(\theta)\,d\theta, \] 
subsequently using Lemma~\ref{lem:trap} and the relation $r_j\asymp M\rho_j$ from \eqref{eq:geometry}, we finally obtain 

\begin{align*} 
|r_jr_kF_\alpha(h_j,h_k) -\Sigma_{\alpha,jk}|  =& dL\left| \frac{1}{2\pi}\int_0^{2\pi}G(\theta)\,d\theta - \frac{1}{L}\sum_{u=0}^{L-1} G\left(\phi+\frac{2\pi u}{L}\right) \right|\\\nonumber
    \lesssim_\alpha&
        (\rho_j\rho_k)^{\alpha/2}
    \frac{d^{1+\alpha}}{(r_jr_k)^\alpha} 
    \lesssim_\alpha 
     M^{-\alpha}
    \frac{d^{1+\alpha}}
         {(r_jr_k)^{\alpha/2}}\nonumber.
\end{align*}

Trivially observe that for any positive integer $n$, there exist at most $3$ occupied parallels that have exactly $n$ points (one in the northern hemisphere, another one in the southern hemisphere, and eventually the equator). Hence, since $r_M\leq 15M$,
\begin{align*}
    |B_{\alpha,N}|
    \lesssim_\alpha
    M^{-\alpha}
    \sum_{1\leq u,v\leq 15M}
    \frac{\mathrm{gcd}(u,v)^{1+\alpha}}
         {(uv)^{\alpha/2}}.
\end{align*}
It remains to estimate the last double sum. For every pair $u,v$, write
uniquely
\[
    u=da,\qquad v=db,\qquad \mathrm{gcd}(a,b)=1,
\]
where $d=\mathrm{gcd}(u,v)$. Then
\[
    \frac{\mathrm{gcd}(u,v)^{1+\alpha}}{(uv)^{\alpha/2}}
    =
    \frac{d}{(ab)^{\alpha/2}}.
\]
Therefore
\begin{align*}
    \sum_{1\leq u,v\leq 15M} &
    \frac{\mathrm{gcd}(u,v)^{1+\alpha}}{(uv)^{\alpha/2}}
    =
    \sum_{\substack{a,b\geq1\\\mathrm{gcd}(a,b)=1}}
    \frac{1}{(ab)^{\alpha/2}}
    \sum_{1\leq g\leq 15M/\max(a,b)}g
    \\
    \lesssim & M^2 \sum_{\substack{a,b\geq1\\\mathrm{gcd}(a,b)=1}}
    \frac{1}{(ab)^{\alpha/2}\max(a,b)^2}
    \lesssim M^2\sum_{a,b\geq1}
    \frac{1}{(ab)^{1+\alpha/2}}
    =M^2\zeta\left(\frac{\alpha}{2}+1\right)^2
    \lesssim_\alpha M^2.
\end{align*}

Restoring the factor $M^{-\alpha}$ gives
\[
    |B_{\alpha,N}|
    \lesssim_\alpha M^{2-\alpha}\asymp N^{1-\alpha/2},
\]
as claimed.

\end{proof}



\subsection{Bound of $A_{\alpha,N}$}

We will compare the union of rings measure $\nu=\sum_j r_jU_{h_j}$ with $N\sigma$  after projecting both
measures onto the height variable. Since the normalized surface measure on
$\mathbb S^2$ has height density $dt/2$, define
\[
 \lambda=\frac N2\mathbf 1_{[-1,1]}(t)\,dt,
 \qquad
 \nu_z=\sum_j r_j\delta_{h_j}.
\]
Thus $\lambda$ is the height projection of $N\sigma$, whereas $\nu_z$ is the
height projection of $\nu$.

For $1\le j\le 2M-1$, let
\[
 B_j=[H_j,H_{j-1}],
\]
then, from \eqref{eq:integralaintervalo}, $\lambda(B_j)=N\sigma(\mathcal B_j)$.
Define the following measures on $B_j$
\begin{align*}
 \nu_j={}&r_j\delta_{h_j},\quad\Rightarrow \nu_z=\sum\nu_j,\\
 \lambda_j={}&\frac N2\mathbf 1_{B_j}(t)\,dt,\quad\Rightarrow \lambda=\sum\lambda_j,\\
 \mu_j={}&\lambda_j-\nu_j.
\end{align*}
We next rewrite $A_{\alpha,N}$ as a sum over the bands. First, note from \eqref{eq:integralaintervalo} that
\begin{align*}
    I_\alpha=E_\alpha[\sigma]=&\int_{\Sph}\int_{\Sph}|x-y|^\alpha d \sigma(x)d \sigma(y)\\
    =&\frac1{N^2}\int_{-1}^1\int_{-1}^1F_\alpha(s,t)\,d\lambda(t)\,d\lambda(s)\\
=&\frac1{N^2}\sum_{j,k=1}^{2M-1}\int_{B_j}\int_{B_k}F_\alpha(s,t)\,d\lambda_k(t)\,d\lambda_j(s)
\end{align*}
We also have from \eqref{eq:energianu}:
\begin{equation*}
 E_\alpha[\nu]=\sum_{j,k=1}^{2M-1}r_jr_k F_\alpha(h_j,h_k)
=\sum_{j,k=1}^{2M-1}\int_{B_j}\int_{B_k} F_\alpha(s,t)\,d\nu_k(t)\,d\nu_j(s).
\end{equation*}
Rotational invariance gives
\[
 \int_{-1}^1F_\alpha(s,t)\,d\lambda(t)=NI_\alpha
 \qquad(-1\le s\le1).
\]
Thus the mixed term in the energy expansion vanishes for
$\mu:=\lambda-\nu_z=\sum_j\mu_j$, which has total mass zero.
We thus have
\begin{equation}\label{eq:latitude-decomposition}
 A_{\alpha,N}
 =I_\alpha N^2-E_\alpha[\nu]
 =-\sum_{j,k=1}^{2M-1}
   \int_{B_j}\int_{B_k}F_\alpha(s,t)\,d\mu_k(s)d\mu_j(t).
\end{equation}

We will use the following technical lemma.

\begin{lemma}\label{lem:blocks}
Let $0<\alpha<2$. There exists a constant $C_\alpha$ with the following properties.

If the two bands have comparable scales,
\[
 \frac{r_j}{8}\le r_k\le 8 r_j,
\]
then
\begin{equation}\label{eq:comparableblock}
 \left|
 \int_{B_j}\int_{B_k}F_\alpha(s,t)\,d\mu_k(s)d\mu_j(t)
 \right|
 \le C_\alpha\frac{r_j}{M^\alpha}
 (1+|j-k|)^{\alpha-3}.                               
\end{equation}

If $r_k> 8 r_j$ and the two bands are on the same side of the equator
(that is, either $j,k<M$ or $j,k>M$), then
\begin{equation}\label{eq:farblock}
 \left|
 \int_{B_j}\int_{B_k}F_\alpha(s,t)\,d\mu_k(s)d\mu_j(t)
 \right|
 \le C_\alpha\frac{r_j^3}{M^\alpha r_k^{5-\alpha}}.
\end{equation}

If $r_k>8r_j$ and the bands are not on the same side
of the equator, including the case in which one of the indices is $M$,
then
\begin{equation}\label{eq:oppositeblock}
 \left|
 \int_{B_j}\int_{B_k}F_\alpha(s,t)\,d\mu_k(s)d\mu_j(t)
 \right|
 \le C_\alpha\frac{r_j^3r_k^3}{M^8}.                
\end{equation}
\end{lemma}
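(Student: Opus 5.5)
The three bounds all rest on one mechanism: the measure $\mu_j$ has vanishing zeroth and first moments on $B_j$, so it kills the constant and linear parts of any smooth function. Since $\lambda_j(B_j)=r_j$ and $h_j$ is the midpoint of $B_j$, for any $g\in C^2(B_j)$ Taylor expansion around $h_j$ gives
\[
\left|\int_{B_j} g\,d\mu_j\right|\le \frac{1}{2}\,\|g''\|_{L^\infty(B_j)}\,\frac{N}{2}\int_{B_j}(t-h_j)^2\,dt\lesssim \|g''\|_{L^\infty(B_j)}\,N\,|B_j|^3 .
\]
Here $|B_j|=2r_j/N$, so the factor is $\asymp r_j^3/N^2\asymp r_j^3/M^4$. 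Applying this twice, in $s$ and in $t$, bounds each block by
\[
\frac{r_j^3r_k^3}{M^8}\,\sup_{B_j\times B_k}\left|\partial_s^2\partial_t^2F_\alpha\right|,
\]
provided $F_\alpha$ is smooth on $B_j\times B_k$. The work is then to estimate mixed derivatives of $F_\alpha$ in terms of the distance between the parallels.

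For derivative bounds I would use the representation $F_\alpha(s,t)=\frac1{2\pi}\int_0^{2\pi}(2-2st-2\rho(s)\rho(t)\cos\theta)^{\alpha/2}d\theta$ with $\rho(s)=\sqrt{1-s^2}$. The integrand is singular only where the two circles approach each other, i.e.\ when $s\approx t$. The minimal distance between the circles is $\delta\asymp |\arccos s-\arccos t|$. Integrating in $\theta$ near the closest point (scale $\theta\sim\delta/\sqrt{\rho(s)\rho(t)}$) should give
\[
\left|\partial_s^a\partial_t^bF_\alpha\right|\lesssim_\alpha \rho^{-1}\,\delta^{\alpha+1-a-b}\,\Phi(s,t),
\]
where $\rho\asymp\rho(s)\asymp\rho(t)$ and $\Phi$ collects the Jacobian factors from the height variable, $ds\asymp\rho\,d\varphi$ in the polar angle $\varphi$. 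Near the poles I would change variables to the polar angle, where $F_\alpha$ is a smooth function of the angles away from the diagonal, and account for the chain-rule factors $1/\rho$. Opposite hemispheres, including the case where one band is the equatorial band $M$, are the easy case: $F_\alpha$ is then uniformly smooth (bounded fourth mixed derivative, since the circles are at distance $\gtrsim 1$ apart in the relevant sense), and \eqref{eq:oppositeblock} follows directly.

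\textbf{Same hemisphere, $r_k>8r_j$.} Band $j$ is much closer to the pole, $\delta\asymp \varphi_k\asymp k/M\asymp r_k/M$, and $\rho\sim$ the larger radius. Expanding in $s$ on $B_j$ only (quadratic error $r_j^3/M^4$ times $\partial_s^2$) and using that $\mu_k$ cancels linear terms in $t$ on $B_k$ should produce $r_j^3r_k^{\alpha-5}M^{-\alpha}$ after bookkeeping. I would check the exponents by the scaling $F_\alpha\sim\delta^{\alpha+1}/\rho$ near the diagonal.

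\textbf{Comparable scales.} If $|j-k|\ge2$, the bands are separated by $\delta\asymp|j-k|/M$ and the double Taylor bound gives roughly
\[
\frac{r_j^6}{M^8}\,\rho^{-1}\delta^{\alpha-3}M^{4}\rho^{-4}\cdots,
\]
which should simplify to $r_jM^{-\alpha}|j-k|^{\alpha-3}$ using $r_j\asymp M\rho_j$. If $|j-k|\le1$, the kernel is not smooth. There I would bound the block crudely using $|\mu_j|(B_j)\le 2r_j$ and the Hölder continuity of $F_\alpha$ of order $\alpha$ (or its $\alpha+1$ regularity on the diagonal) at scale $|B_j|\asymp r_j/M^2$. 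Equivalently, subtract $F_\alpha(h_j,\cdot)$ and use the modulus of continuity, getting $r_j^2\cdot \rho^{-1}(r_j/(M^2\rho))^{\alpha+1}\cdot$(normalization) $\asymp r_jM^{-\alpha}$.

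\textbf{Main obstacle.} The hardest step is getting uniform derivative estimates of $F_\alpha$ near the poles and near the diagonal, with the correct powers of $\rho$. I would first try to derive them from an explicit hypergeometric/elliptic-integral expression for $F_\alpha$ in terms of $\delta$ and $\rho(s)\rho(t)$.
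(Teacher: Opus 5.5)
\textbf{Verdict: genuine gap.} Your reduction matches the paper's: the vanishing zeroth and first moments of $\mu_j,\mu_k$, plus Taylor in each variable, bound a block by $\frac{r_j^3r_k^3}{M^8}\sup_{B_j\times B_k}|\partial_s^2\partial_t^2F_\alpha|$. The substance of the lemma is the derivative estimates, though, and the mechanisms you sketch for them do not deliver the stated bounds. There are two places where they fail.

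\textbf{The far block and other separated pairs.} For \eqref{eq:farblock}, passing to polar angles and paying chain-rule factors $1/\sin\phi\asymp M/r_j$ on the small band gives a lossy term-by-term bound. For example, the term $D^{\alpha-2}/(\sin\phi\sin\psi)^3$ with $D\asymp r_k/M$ leads only to a block bound of order $r_k^{\alpha-2}M^{-\alpha}$. This is off by $(r_k/r_j)^3$. Worse, the polar cap $B_1=[H_1,1]$ does occur in this case, and there $1/\sin\phi$ is unbounded. Your heuristic ``$\rho\sim$ the larger radius'' gives the right exponent, but nothing in your plan justifies it.

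The missing idea is a separated-regime estimate in the height variables. Put $U=2-2st$ and $V=2\sqrt{1-s^2}\sqrt{1-t^2}$, and suppose $V\le(1-\varepsilon)U$.
\begin{itemize}
\item Expanding $(U-V\cos\theta)^{\alpha/2}$ binomially, only even powers of $\cos\theta$ survive the average. So $F_\alpha=U^{\alpha/2}\sum_m a_m(V^2/U^2)^m$.
\item This is a smooth function of the polynomials $U$ and $V^2=4(1-s^2)(1-t^2)$, up to the poles.
\item It satisfies $|\partial_s^2\partial_t^2F_\alpha|\lesssim_{\alpha,\varepsilon}U^{\alpha/2-4}$, with no $1/\sin$ losses.
\end{itemize}
You would then still need to verify from the band geometry that $V\le(1-\varepsilon)U$ and $U\gtrsim(r_k/M)^2$ on $B_j\times B_k$ when $r_k>8r_j$ on one side of the equator, and that $U\gtrsim1$ in the opposite-side case. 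A hypergeometric formula in $\delta$ and $\rho(s)\rho(t)$, as you propose, hides this, since neither quantity is smooth in $s$ at $s=1$. The same tool is needed for comparable scales with $|j-k|>2r_j$. There your scaling $\delta^{\alpha-3}\rho^{-5}$ no longer describes $F_\alpha$, and the chain-rule bound loses a factor $|j-k|/r_j$.

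\textbf{The near-diagonal step.} For $|j-k|\le1$ the argument also fails as written. Besides the singular piece of size $\delta^{\alpha+1}/\rho$, $F_\alpha$ has a large regular part.
\begin{itemize}
\item On the diagonal, $\partial_\phi F_\alpha\asymp\cos\phi\,\sin^{\alpha-1}\phi$. Subtracting $F_\alpha(h_j,\cdot)$ and using total variation therefore only yields about $r_j^2M^{-1}\rho_j^{\alpha-1}\asymp r_j^{1+\alpha}M^{-\alpha}$.
\item Using the zero masses in both variables (a mixed first difference) still only yields $r_j^{\alpha}M^{-\alpha}$ for $1<\alpha<2$. There $\partial_\phi\partial_\psi F_\alpha\asymp\rho^{\alpha-2}$ on the diagonal.
\end{itemize}
Both exceed the required $r_jM^{-\alpha}$. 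You must isolate the singular piece and kill the remainder with the first moments in both variables.

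The paper does this by splitting the $\theta$-integral at $\theta=1/r$:
\begin{itemize}
\item On $[0,1/r]$ one has $D\lesssim1/M$, so $\int_0^{1/r}D^\alpha\,d\theta\lesssim r^{-1}M^{-\alpha}$, and total variation gives $rM^{-\alpha}$.
\item On $[1/r,\pi]$ the integrand is smooth, with $\sup|\partial_s^2\partial_t^2\frac1\pi\int_{1/r}^\pi D^\alpha d\theta|\lesssim M^{8-\alpha}r^{-5}$. The double Taylor bound then again gives $rM^{-\alpha}$.
\end{itemize}
The caps $B_1,B_{2M-1}$ in the comparable case also need their own argument, because your $1/\rho$ bookkeeping breaks down there. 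For instance, $\sup F_\alpha\lesssim M^{-\alpha}$ when both bands lie within $O(1/M)$ of one pole.
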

\begin{proof}
    See Appendix \ref{appendix2}.
\end{proof}

We are now ready to prove the desired bound.
\begin{lemma}\label{lem:latitude}
For $0<\alpha<2$,
\[
 A_{\alpha,N}\lesssim_\alpha N^{1-\alpha/2}.
\]
\end{lemma}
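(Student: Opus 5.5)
The plan is to start from the decomposition \eqref{eq:latitude-decomposition}, apply the triangle inequality, and bound
\[
|A_{\alpha,N}|\le \sum_{j,k=1}^{2M-1}\left|\int_{B_j}\int_{B_k}F_\alpha(s,t)\,d\mu_k(s)d\mu_j(t)\right|.
\]
We split the pairs $(j,k)$ into the three regimes of Lemma~\ref{lem:blocks}. The bound is symmetric in $j,k$, because the integrand $F_\alpha$ is symmetric. Hence in the non-comparable cases we may assume $r_k>8r_j$, at the cost of a factor $2$. The target is $M^{2-\alpha}\asymp N^{1-\alpha/2}$.

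\textbf{Comparable pairs.} By \eqref{eq:comparableblock} these contribute at most
\[
C_\alpha M^{-\alpha}\sum_j r_j\sum_k(1+|j-k|)^{\alpha-3}.
\]
Since $\alpha<2$, the exponent satisfies $\alpha-3<-1$, so the inner sum is bounded uniformly in $j$ by a constant depending on $\alpha$. Using $\sum_j r_j=N\asymp M^2$, this part is $\lesssim_\alpha M^{2-\alpha}$.

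\textbf{Same side, $r_k>8r_j$.} Here $r_j\asymp j'$ and $r_k\asymp k'$, where $j',k'$ denote the distances (plus one) to the nearest pole; indeed $r_i=4i$ for $i<M$. By \eqref{eq:farblock} the contribution is
\[
\lesssim M^{-\alpha}\sum_{k'}k'^{\alpha-5}\sum_{j'<k'/8}j'^3\lesssim M^{-\alpha}\sum_{k'\le M}k'^{\alpha-1}\lesssim_\alpha M^{-\alpha}M^{\alpha}=O(1),
\]
which is much smaller than $M^{2-\alpha}$. The equator index $r_M$ is of order $M$ and fits into the same count.

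\textbf{Opposite sides, $r_k>8r_j$.} By \eqref{eq:oppositeblock} the contribution is
\[
\lesssim M^{-8}\Big(\sum_j r_j^3\Big)^2\lesssim M^{-8}(M^4)^2=O(1),
\]
since $r_j\lesssim M$ for all $j$ and there are $O(M)$ indices, so $\sum_j r_j^3\lesssim M^4$. Adding the three cases gives $A_{\alpha,N}\lesssim_\alpha M^{2-\alpha}\asymp N^{1-\alpha/2}$, and nonnegativity was already shown.

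The main obstacle is Lemma~\ref{lem:blocks} itself, which we assume. Given it, the only delicate point is the bookkeeping in the comparable case: one must verify that the decay $(1+|j-k|)^{\alpha-3}$ is summable exactly because $\alpha<2$. One must also check that the near-equator indices, where $r_j$ is not exactly $4j$, cause no trouble. This is handled by $r_M\asymp M$ and the symmetry $r_{M+j}=r_{M-j}$.
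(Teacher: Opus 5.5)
Your proposal is correct and follows the paper's proof essentially verbatim. It uses the same three-way split via Lemma~\ref{lem:blocks}, the summability of $(1+|j-k|)^{\alpha-3}$ for $\alpha<2$, the count $\sum_{k\le M} k^{\alpha-1}\lesssim_\alpha M^{\alpha}$ on each hemisphere, and the crude bound $M^{-8}(\sum_j r_j^3)^2\lesssim 1$ for opposite bands. One small correction: pairs involving the equatorial index $M$ fall under \eqref{eq:oppositeblock}, not \eqref{eq:farblock}, but your opposite-side sum already runs over all pairs and so covers them.
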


\begin{proof}
Taking absolute values in
\eqref{eq:latitude-decomposition}, we divide the pairs $(j,k)$ according to
Lemma~\ref{lem:blocks}.

For comparable scales, \eqref{eq:comparableblock} gives
\begin{align*}
 \sum_{\substack{1\le j,k\le2M-1\\r_j/8\le r_k\le8r_j}}
 \left|
 \int_{B_j}\int_{B_k}F_\alpha\,d\mu_jd\mu_k
 \right|
 \lesssim_\alpha&
 M^{-\alpha}
 \sum_{j=1}^{2M-1}r_j
 \sum_{k=1}^{2M-1}(1+|j-k|)^{\alpha-3}\\
 \lesssim_\alpha&
 M^{-\alpha}
 \sum_{j=1}^{2M-1}r_j\lesssim M^{2-\alpha}\asymp N^{1-\alpha/2},
\end{align*}
the second inequality because,
since $\alpha<2$, the inner sum is at most $2\zeta(3-\alpha)<\infty$.

For unequal scales on the same side of the equator, it is enough by
symmetry to sum for $r_k>8r_j$. Since on either side there is at most one band
of each scale, \eqref{eq:farblock} gives
\[
 \sum_{r_k>8r_j}
 \left|
 \int_{B_j}\int_{B_k}F_\alpha\,d\mu_jd\mu_k
 \right|
 \lesssim_\alpha M^{-\alpha}
 \sum_{r=1}^{M}\frac1{r^{5-\alpha}}
 \sum_{m<r/8}m^3
 \lesssim_\alpha
 M^{-\alpha}
 \sum_{r=1}^{M}{r^{\alpha-1}}\lesssim_\alpha 1.
\]
The reverse orientation and the other
hemisphere give the same estimate.

Finally, \eqref{eq:oppositeblock} gives for all remaining noncomparable
pairs
\[
 \sum_{j,k}
 \left|
 \int_{B_j}\int_{B_k}F_\alpha\,d\mu_jd\mu_k
 \right|
 \lesssim_\alpha M^{-8}
 \left(\sum_{j=1}^{2M-1}r_j^3\right)^2\\
 \lesssim 1.
\]
Combining the three estimates we get $
 A_{\alpha,N}\le C_\alpha N^{1-\alpha/2}$, as wanted.

\end{proof}

\appendix

\section{Proof of Lemma \ref{lem:trap}}\label{appendix1}

The assertion is trivial when $B=0$. Assume therefore that $B>0$, and set
\[
    a=\frac{\alpha}{2}\in(0,1),
    \qquad
    \delta=\frac{A-B}{B}\geq 0.
\]
Then
\[
    G(\theta)=B^a f_\delta(\theta),
    \;\;\mbox{with}\;\;
    f_\delta(\theta):=(1+\delta-\cos\theta)^a.
\]
We use the Fourier normalization
\[
    \widehat h(n)
    :=
    \frac{1}{2\pi}\int_0^{2\pi}
    h(\theta)e^{-in\theta}\,d\theta.
\]

We first prove that, for every $n\neq 0$,
\begin{equation}\label{eq:monotone-fourier}
    \widehat f_0(n)
    \leq
    \widehat f_\delta(n)
    \leq 0.
\end{equation}

Since $f_\delta$ is real and even $\widehat f_\delta(-n)=\widehat f_\delta(n),$
so it suffices to consider $n\geq 1$.

We begin with an elementary positivity observation for $\delta>0$. In this range,
\begin{equation*}
    (1+\delta-\cos\theta)^{a-1}
    =(1+\delta)^{a-1}\left(1-\frac{\cos\theta}{1+\delta}\right)^{a-1}  
    =(1+\delta)^{a-1}
      \sum_{k=0}^{\infty}
      \frac{(1-a)_k}{k!}  \left(\frac{\cos\theta}{1+\delta}\right)^k,
\end{equation*}
and the binomial series converges absolutely and uniformly in $\theta$. Moreover, as
\[
    \cos^k\theta
    =
    2^{-k}\sum_{j=0}^k
    \binom{k}{j}e^{i(k-2j)\theta}.
\]
for $n\geq0$,
\[
    \widehat{\cos^k}(n)
    =
    \begin{cases}
    \displaystyle
    2^{-k}\binom{k}{(k-n)/2},
        & k\geq n \ \text{and}\ k\equiv n \pmod 2,\\[2ex]
    0,  & \text{otherwise}.
    \end{cases}
\]
All these coefficients are nonnegative, and the term $k=n$ is strictly
positive. It follows that
\begin{equation*}
    \widehat g_\delta(n)>0,
    \qquad
    g_\delta(\theta):=(1+\delta-\cos\theta)^{a-1},
    \qquad
    \delta>0,\ n\in\mathbb Z.
\end{equation*}

For every fixed $\delta_0>0$, differentiation under the integral sign is
justified by dominated convergence
because the derivative is bounded
$$\left. \frac{d}{d\delta}\right|_{\delta=\delta_0}f_\delta(\theta)=ag_{\delta_0}(\theta)\le a{\delta_0}^{a-1}.$$
Therefore
\[
    \frac{d}{d\delta}\widehat f_\delta(n)
    =
    a\,\widehat g_\delta(n)>0,
    \qquad
    \delta>0,\quad n\geq1.
\]
Thus, for fixed $n\ge 1,$ $\delta\mapsto\widehat f_\delta(n)$ is strictly increasing on
$(0,\infty)$, and
$\|f_\delta-f_0\|_\infty\le\delta^a\to0$ as $\delta\downarrow0$, proving
\begin{equation*}
    \widehat f_0(n)\leq\widehat f_\delta(n),
    \qquad
    \delta>0,\quad n\geq1.
\end{equation*}

To get (\ref{eq:monotone-fourier}) it remains to prove that the increasing function
$\widehat f_\delta(n)$ converges to zero as $\delta\to +\infty$. 
Since the constant function has zero $n$-th Fourier coefficient for
$n\neq0$,
\[
    \widehat f_\delta(n)
    =     \frac{1}{2\pi}\int_0^{2\pi}
      \bigl((1+\delta-\cos\theta)^a-(1+\delta)^a\bigr)e^{-in\theta}\,d\theta.
\]
Since the absolute value of the integrand is bounded above uniformly by $a\delta^{a-1}$ (use the Mean Value Theorem), which is bounded above for large $\delta$, we can use Lebesgue's Dominated Convergence Theorem to conclude that, in particular,
\begin{equation}\label{eq:fourier-domination}
    |\widehat f_\delta(n)|
    \leq
    |\widehat f_0(n)|,
    \qquad n\neq0,\quad \delta\geq0.
\end{equation}

We now compute the coefficients of $f_0$. Since
\[
    f_0(\theta)
    =(1-\cos\theta)^a
    =2^a\sin^{2a}\frac{\theta}{2},
\]
we have, for $n\geq1$,
\[
    \widehat f_0(n)
    =
    \frac{2^a}{\pi}
    \int_0^\pi
    \sin^{2a}u\,\cos(2nu)\,du.
\]
The standard beta-integral identity \cite[3.631 (8)]{GR07}
\[
    \int_0^\pi
    \sin^{2a}u\,\cos(2nu)\,du
    =
    \frac{(-1)^n\pi\Gamma(2a+1)}
    {2^{2a}\Gamma(a-n+1)\Gamma(a+n+1)}
\]
and the Gamma function reflection formula $\Gamma(z)\Gamma(1-z)=\pi/\sin(\pi z)$ (valid for noninteger $z$) give
\begin{equation}\label{eq:f0-explicit}
    \widehat f_0(n)
    =
    -\frac{2^{-a}\Gamma(2a+1)\sin(\pi a)}{\pi}
    \frac{\Gamma(n-a)}{\Gamma(n+a+1)}<0.
\end{equation}
The elementary estimate \[
    \frac{\Gamma(n-a)}{\Gamma(n+a+1)}
    \leq C_a n^{-1-2a}
\]
combined with \eqref{eq:fourier-domination} and
\eqref{eq:f0-explicit} gives:
\[
    |\widehat f_\delta(n)|
    \leq C_a |n|^{-1-2a}
    =
    C_\alpha |n|^{-1-\alpha},
    \qquad n\neq0,
\]
uniformly for $\delta\geq0$. Therefore
\[
    |\widehat G(n)|
    \leq
    C_\alpha B^{\alpha/2}|n|^{-1-\alpha},
    \qquad n\neq0.
\]
Since the Fourier coefficients are absolutely summable, the Fourier
series of $G$ converges absolutely and uniformly. Thus the discrete Poisson formula (see for example \cite[eq. (3.17)]{TW14})
gives
\[
    \frac1L\sum_{k=0}^{L-1}
    G\left(\phi+\frac{2\pi k}{L}\right)
    -
    \frac1{2\pi}\int_0^{2\pi}G(\theta)\,d\theta
    =
    \sum_{m\neq0}
    \widehat G(mL)e^{imL\phi}.
\]
Consequently,
\begin{equation*}
    \left|
    \frac1L\sum_{k=0}^{L-1}
    G\left(\phi+\frac{2\pi k}{L}\right)
    -
    \frac1{2\pi}\int_0^{2\pi}G(\theta)\,d\theta
    \right|
    \leq
    C_\alpha B^{\alpha/2}
    \sum_{m\neq0}|mL|^{-1-\alpha}\leq
    C_\alpha B^{\alpha/2}L^{-1-\alpha}.
\end{equation*}

This proves the lemma.
\section{Proof of Lemma \ref{lem:blocks}}\label{appendix2}

If $J_j:=\{\arccos t:t\in B_j\}$ is the corresponding interval of polar
angles, then
\begin{equation}\label{eq:angular-band-scales}
 \mathrm{Length}(J_j)\asymp \frac1M,\;\mbox{for}\;1\le j\le 2M-1
 \;\;\mbox{and}\;\;
 \sin\phi\asymp\frac{r_j}{M}
 \quad(\phi\in J_j,\ j\notin\{1,2M-1\}).    
\end{equation}
On the two polar intervals we have only
$\sin \phi\lesssim M^{-1}$
for $\phi\in J_1\cup J_{2M-1}.$
Those two endpoint
bands will always be covered by the elementary estimates below. Moreover,
when $|j-k|\ge2$,
\begin{equation}\label{eq:angular-separation-bands}
 \operatorname{dist}(J_j,J_k)\asymp\frac{|j-k|}{M},
\end{equation}
for all $1\le j,k\le 2M-1.$
These estimates follow from $N\asymp M^2$,
and the mean value theorem applied to $\arccos t$.

Recall the definition of $F_\alpha$ from \eqref{eq:Falpha}. Since the integrand is symmetric about $\theta=\pi$, it is convenient to
write
\begin{equation}\label{eq:F-polar-average}
 F_\alpha(s,t)=\frac1\pi\int_0^\pi D(s,t,\theta)^\alpha\,d\theta,
\end{equation}
where, with $\phi=\arccos s$ and $\psi=\arccos t$,
\begin{align}
 D(s,t,\theta)^2
 &=2-2\bigl(\cos\phi\cos\psi+
              \sin\phi\sin\psi\cos\theta\bigr)\notag\\
 &=2(1-\cos(\phi-\psi))
   +2\sin\phi\sin\psi(1-\cos\theta).              
\end{align}
In particular, for $0\le\theta\le\pi$,
\begin{equation}\label{eq:distance-comparison}
 D(s,t,\theta)^2
 \asymp (\phi-\psi)^2+
          \sin\phi\sin\psi\,\theta^2.              
\end{equation}

Note that for all $j$,
\[
 \lambda_j(B_j)=\frac N2(H_{j-1}-H_j)=r_j=\nu_j(B_j),\quad \int_{B_j}t\,d\lambda_j(t)
 =\frac N4(H_{j-1}^2-H_j^2)
 =r_jh_j= \int_{B_j}t\,d\nu_j(t).
\]
Therefore for $\mu_j=\lambda_j-\nu_j$
\begin{equation*}
 \int_{B_j}d\mu_j=0,
 \qquad
 \int_{B_j}t\,d\mu_j(t)=0,
\end{equation*}
and 
\begin{equation*}
 \|\mu_j\|_{\rm TV}
 \le \|\nu_j\|_{\rm TV}+\|\lambda_j\|_{\rm TV}
 =2r_j,\quad j=1,\ldots,2M-1,
\end{equation*}
where $\|\eta\|_{\rm TV}$ denotes the total variation of a signed
measure $\eta$.
We shall repeatedly use the following consequence of Taylor's theorem. If
a signed measure $\eta$ is supported on an interval $I$ of length $a$ and
satisfies
\[
 \int_I d\eta=\int_I t\,d\eta(t)=0,
\]
then, for every $f\in C^2(I)$,
\begin{equation}\label{eq:one-variable-taylor}
 \left|\int_I f\,d\eta\right|
 \le \frac12\|\eta\|_{\rm TV}a^2\sup_I|f''|.        
\end{equation}
Indeed, subtract from $f$ its affine Taylor polynomial at any point of
$I$ and use the remainder formula. Applying \eqref{eq:one-variable-taylor}
 two times and using $H_{j-1}-H_j\lesssim r_j/M^2$ gives for a function $G\in C^4(B_j\times B_k)$
\begin{align}
 \left|
 \int_{B_k}\int_{B_j}G(s,t)\,d\mu_j(s)d\mu_k(t)
 \right|
 \le& \frac{r_k^2}{2M^4}\|\mu_k\|_{\rm TV}\sup_{t\in B_k}\left|\frac{\partial^2}{\partial t^2}\int_{B_j}G(s,t)\,d\mu_j(s)\right|\label{eq:two-variable-taylor}\\
 \nonumber
  =& \frac{r_k^2}{2M^4}\|\mu_k\|_{\rm TV}\sup_{t\in B_k}\left|\int_{B_j}\frac{\partial^2}{\partial t^2}G(s,t)\,d\mu_j(s)\right|\\
  \nonumber
 \le&\frac{r_j^2r_k^2}{4M^8}\|\mu_j\|_{\rm TV}\|\mu_k\|_{\rm TV}
 \sup_{B_j\times B_k}|\partial_s^2\partial_t^2G|   \\
 \nonumber
 \le&\frac{r_j^3r_k^3}{M^8}
 \sup_{B_j\times B_k}|\partial_s^2\partial_t^2G|,
\end{align}
where the continuity of the mixed derivative on $B_j\times B_k$ justifies the interchange of derivative and integral.

We now record two elementary derivative estimates.

\begin{lemma}
\label{lem:derivative-bounds}
Let for $-1\le s,t\le 1$ and $0\le \theta\le 2\pi$
\[
D(s,t,\theta)
=
\left|
\bigl(\sqrt{1-s^2}\cos\theta,
      \sqrt{1-s^2}\sin\theta,s\bigr)
-
\bigl(\sqrt{1-t^2},0,t\bigr)
\right|,
\]
so that
\[
D(s,t,\theta)^2
=
2-2\left(
st+\sqrt{1-s^2}\sqrt{1-t^2}\cos\theta
\right).
\]
Writing
\[
s=\cos\phi,\qquad t=\cos\psi,
\]
we equivalently have
\[
D(s,t,\theta)
=
|x(\phi,\theta)-y(\psi)|,
\]
where
\[
x(\phi,\theta)
=
(\sin\phi\cos\theta,\sin\phi\sin\theta,\cos\phi),
\qquad
y(\psi)
=
(\sin\psi,0,\cos\psi).
\]

At every point for which $D(s,t,\theta)>0$, and for all
$m,n\ge0$ with $m+n\le4$,
\begin{equation}
\label{eq:polar-derivatives}
\left|
\partial_\phi^m\partial_\psi^n
D(s,t,\theta)^\alpha
\right|
\le
C_\alpha
D(s,t,\theta)^{\alpha-m-n}.
\end{equation}
Moreover, if $-1<s,t<1$,
\begin{equation}\label{eq:height-derivatives}
\left|
\partial_s^2\partial_t^2
D(s,t,\theta)^\alpha
\right|
\le C_\alpha\left(
\frac{D^{\alpha-4}}{(\sin\phi\sin\psi)^{2}}
+\frac{D^{\alpha-3}}{\sin^{2} \phi \sin^{3} \psi}
+\frac{D^{\alpha-3}}{\sin^3 \phi \sin^2 \psi}
+\frac{D^{\alpha-2}}{(\sin\phi\sin\psi)^{3}}
\right),
\end{equation}
where we write $D=D(s,t,\theta).$
\end{lemma}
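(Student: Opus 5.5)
The plan is to write $D^\alpha=(D^2)^{a}$ with $a=\alpha/2$ and $Q:=D^2=2-2(\cos\phi\cos\psi+\sin\phi\sin\psi\cos\theta)$. Everything then reduces to the chain rule (Faà di Bruno) combined with bounds on the derivatives of the smooth function $Q$ and of the coordinate change $s=\cos\phi$, $t=\cos\psi$.

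For \eqref{eq:polar-derivatives}, the key observation concerns $\partial_\phi$ and $\partial_\psi$ of $Q$. We have
\[
\partial_\phi Q=2(x(\phi,\theta)-y(\psi))\cdot\bigl(-\partial_\phi x\bigr),
\]
and $|\partial_\phi x|=1$, so $|\partial_\phi Q|\le 2D$, and similarly $|\partial_\psi Q|\le 2D$. All higher mixed derivatives of $Q$ in $(\phi,\psi)$ are trigonometric polynomials, hence bounded by an absolute constant. Faà di Bruno expresses $\partial_\phi^m\partial_\psi^n Q^a$ as a sum of terms
\[
Q^{a-p}\prod_{i=1}^{p}\partial^{\beta_i}Q,\qquad \sum_i|\beta_i|=m+n.
\]
A factor with $|\beta_i|=1$ contributes at most $D$, and a factor with $|\beta_i|\ge2$ contributes $O(1)\le O(D^{2-|\beta_i|})$, since $D\le2$. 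Hence each factor is $\lesssim D^{2-|\beta_i|}$, and the whole term is
\[
\lesssim D^{2a-2p}\,D^{2p-(m+n)}=D^{\alpha-m-n}.
\]
This proves \eqref{eq:polar-derivatives}, with constants depending only on $\alpha$ because $m+n\le4$.

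For \eqref{eq:height-derivatives}, change variables. Since $d\phi/ds=-1/\sin\phi$, we have $\partial_s=-(\sin\phi)^{-1}\partial_\phi$. The chain rule then gives
\[
\partial_s^2 g=\frac{1}{\sin^2\phi}\,\partial_\phi^2 g-\frac{\cos\phi}{\sin^3\phi}\,\partial_\phi g,
\]
and analogously for $t$ and $\psi$, with the two operators commuting because they act on independent variables. Composing gives
\[
\partial_s^2\partial_t^2 g=\Bigl(\frac{\partial_\phi^2}{\sin^2\phi}-\frac{\cos\phi\,\partial_\phi}{\sin^3\phi}\Bigr)\Bigl(\frac{\partial_\psi^2}{\sin^2\psi}-\frac{\cos\psi\,\partial_\psi}{\sin^3\psi}\Bigr)g,
\]
which is a sum of four terms. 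Each is a coefficient of the form $(\sin\phi)^{-2\text{ or }-3}(\sin\psi)^{-2\text{ or }-3}$ times $\partial_\phi^{m}\partial_\psi^{n}g$ with $m,n\in\{1,2\}$. Applying \eqref{eq:polar-derivatives} with $g=D^\alpha$, the four terms produce exactly the four terms listed:
\[
(m,n)=(2,2)\ \text{gives}\ \frac{D^{\alpha-4}}{\sin^2\phi\sin^2\psi},\qquad
(2,1)\ \text{gives}\ \frac{D^{\alpha-3}}{\sin^2\phi\sin^3\psi},
\]
\[
(1,2)\ \text{gives}\ \frac{D^{\alpha-3}}{\sin^3\phi\sin^2\psi},\qquad
(1,1)\ \text{gives}\ \frac{D^{\alpha-2}}{\sin^3\phi\sin^3\psi},
\]
after bounding $|\cos|\le1$.

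The only delicate point is the bound $|\partial Q|\lesssim D$ on first derivatives. It must come from the geometric inner-product form rather than from a crude bound, since otherwise negative powers of $D$ would appear with the wrong exponent. Smoothness holds wherever $D>0$ and $s,t\in(-1,1)$, which justifies all the differentiations.
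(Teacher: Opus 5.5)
Your proof is correct and is essentially the paper's argument. The paper composes $F(z)=|z|^\alpha$ with the smooth map $z=x(\phi,\theta)-y(\psi)$ and uses the homogeneity bound $|D^qF(z)|\lesssim |z|^{\alpha-q}$, whereas you compose $u\mapsto u^{\alpha/2}$ with $Q=|z|^2$ and compensate for the weaker outer bound $Q^{\alpha/2-p}$ with $|\partial_\phi Q|,|\partial_\psi Q|\le 2D$; this is the same chain-rule bookkeeping, and your passage to $\partial_s^2\partial_t^2$ via $\partial_s=-(\sin\phi)^{-1}\partial_\phi$ is identical to the paper's (the stray minus sign in your formula for $\partial_\phi Q$ is harmless, since only its absolute value is used).
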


\begin{proof}
Set $z(\phi,\psi,\theta)
=
x(\phi,\theta)-y(\psi),$
so that
\[
D(s,t,\theta)^\alpha=|z(\phi,\psi,\theta)|^\alpha.
\]
For
\[
F(z)=|z|^\alpha,\qquad z\neq0,
\]
the $q$-th differential $D^qF$ is homogeneous of degree $\alpha-q$.
Hence
\[
|D^qF(z)|
\le
C_{\alpha,q}|z|^{\alpha-q}.
\]
Observe that every derivative of $z$ with respect to $\phi$ and $\psi$ is uniformly
bounded. Therefore, if $r=m+n$, repeated application of the chain rule
expresses
\[
\partial_\phi^m\partial_\psi^nF(z)
\]
as a finite sum of terms involving $D^qF(z)$, with $1\le q\le r$,
applied to bounded derivatives of $z$. Each such term is bounded by
\[
C_\alpha D^{\alpha-q}.
\]
Since $D\le2$ and $q\le r$,
\[
D^{\alpha-q}
=
D^{\alpha-r}D^{r-q}
\le
2^{r-q}D^{\alpha-r}.
\]
This proves
\[
\left|
\partial_\phi^m\partial_\psi^nD^\alpha
\right|
\le
C_\alpha D^{\alpha-m-n}.
\]

We now pass to the height variables. Since
\[
s=\cos\phi,\qquad t=\cos\psi,
\]
we have
\[
\partial_s
=
-\frac1{\sin\phi}\partial_\phi,
\qquad
\partial_t
=
-\frac1{\sin\psi}\partial_\psi,
\]
and therefore
\[
\partial_s^2
=
\frac1{\sin^2\phi}\partial_\phi^2
-
\frac{\cos\phi}{\sin^3\phi}\partial_\phi,
\]
with the analogous formula for $\partial_t^2$.

Expanding $\partial_s^2\partial_t^2D^\alpha$ gives four terms
\begin{equation*}
\partial_s^2\partial_t^2D^\alpha
=
\frac{\partial_\phi^2\partial_\psi^2D^\alpha}
     {\sin^2\phi\,\sin^2\psi}
-
\frac{\cos\psi\,
      \partial_\phi^2\partial_\psi D^\alpha}
     {\sin^2\phi\,\sin^3\psi}-
\frac{\cos\phi\,
      \partial_\phi\partial_\psi^2D^\alpha}
     {\sin^3\phi\,\sin^2\psi}
+
\frac{\cos\phi\cos\psi\,
      \partial_\phi\partial_\psi D^\alpha}
     {\sin^3\phi\,\sin^3\psi}.
\end{equation*}
Using $|\cos\phi|,|\cos\psi|\le1$ and the preceding polar derivative
estimate with total orders $4,3,3,$ and $2$, respectively, yields
\eqref{eq:height-derivatives}.
\end{proof}




\begin{lemma}
\label{lem:separated-derivative}
Let for $-1\le s,t\le 1$
$$U=2-2st,\qquad V=2\sqrt{1-s^2}\sqrt{1-t^2},$$
and suppose that, for some fixed $\varepsilon>0$,
$U>0$ and
$V\le (1-\varepsilon)U.$ Then
\begin{equation}
\label{eq:separated-derivative}
\left|
\partial_s^2\partial_t^2F_\alpha(s,t)
\right|
\le
C_{\alpha,\varepsilon}\,
U^{\alpha/2-4}.
\end{equation}
At boundary points satisfying the hypotheses, derivatives are
understood via the smooth extension defined by the series below.
\end{lemma}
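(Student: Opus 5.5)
Because $V\le(1-\varepsilon)U$, the function $F_\alpha$ has an absolutely convergent binomial expansion, and I would differentiate that expansion term by term. Write
\[
F_\alpha(s,t)=\frac1{2\pi}\int_0^{2\pi}(U-V\cos\theta)^{\alpha/2}\,d\theta
=U^{\alpha/2}\frac1{2\pi}\int_0^{2\pi}\Bigl(1-\tfrac VU\cos\theta\Bigr)^{\alpha/2}d\theta .
\]
Put $w=V/U\in[0,1-\varepsilon]$. The binomial series $(1-w\cos\theta)^{\alpha/2}=\sum_k \binom{\alpha/2}{k}(-1)^k w^k\cos^k\theta$ converges uniformly in $\theta$. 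Averaging kills the odd powers, which gives
\[
F_\alpha=\sum_{m\ge0} c_m\,U^{\alpha/2-2m}V^{2m},\qquad c_m=\binom{\alpha/2}{2m}\binom{2m}{m}4^{-m},\quad |c_m|\lesssim_\alpha 1 .
\]
The key point is that $V^{2m}=4^m(1-s^2)^m(1-t^2)^m$ is a \emph{polynomial} in $(s,t)$. So each term is $U^{\alpha/2-2m}$ times a polynomial, and $U=2-2st$ is affine in each variable. The square roots disappear, and this series is the smooth extension mentioned at boundary points.

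Next I would differentiate each term with the Leibniz rule. Set $P_m=(1-s^2)^m(1-t^2)^m$ and $\beta=\alpha/2-2m$. Two facts drive the estimate:
\begin{itemize}
\item $\partial_s^a\partial_t^b U^{\beta}$ is a sum of at most $2^{a+b}$ terms. Each has size at most $\lesssim |\beta|^{a+b}U^{\beta-\max(a,b)}$ times monomials in $s,t$ bounded by $1$. The reason is that $\partial_sU=-2t$, $\partial_tU=-2s$ and $\partial_s\partial_tU=-2$; a crude bound gives $|\beta|_{(a+b)}U^{\beta-a-b}\cdot C$.
\item Derivatives of $P_m$ lose powers of $(1-s^2)$ and $(1-t^2)$. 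For instance, $|\partial_s^a\partial_t^bP_m|\lesssim m^{a+b}(1-s^2)^{m-a}(1-t^2)^{m-b}$ for $a,b\le 2$, using $|s|,|t|\le1$.
\end{itemize}
Writing $V^2=4(1-s^2)(1-t^2)$, each resulting term looks like
\[
\mathrm{poly}(m)\,U^{\beta-q}\,V^{2m-2r}\,X,
\]
where $q+r\le4$ (roughly) and $X$ is bounded. Now use $V^{2m-2r}\le (1-\varepsilon)^{2m-2r}U^{2m-2r}$, together with $U\le4$ to absorb any excess nonnegative powers. Each term is then bounded by $\mathrm{poly}(m)(1-\varepsilon)^{2m}U^{\alpha/2-4}$, with constants depending on $\varepsilon$ through the factor $(1-\varepsilon)^{-2r}$ for $r\le4$.

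The main obstacle is the bookkeeping in that last step. A derivative landing on $P_m$ yields, for example, $(1-s^2)^{m-1}$, which is not controlled by $V^{2m-2}$ alone. The factor of $(1-t^2)$ left over from $V^{2m}$ must be tracked so that it is either bounded by $1$ or paired correctly. The exponent count that matters is this: every derivative either lowers the power of $U$ by one, or removes one factor $(1-s^2)$ or $(1-t^2)$. A removed factor costs at most $U^{-1}$ after the comparison, because $(1-s^2)(1-t^2)\le V^2/4\lesssim U^2$ and the remaining single factor is $\le1$. Totalling four derivatives therefore gives $U^{\alpha/2-4}$ in the worst case. I would organize this by writing $\partial_s^2\partial_t^2$ of $U^\beta P_m$ via Leibniz, where $P_m=A^mB^m$ with $A=1-s^2$ and $B=1-t^2$. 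I would then bound $A^{m-i}B^{m-k}\le (AB)^{m-\max(i,k)}$, which by the hypothesis is at most $\bigl((1-\varepsilon)U/2\bigr)^{2m-2\max(i,k)}$. Since $\max(i,k)\le2$, the total loss is a factor $U^{-4}$ at most.

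Finally, sum over $m$ using $\sum_m \mathrm{poly}(m)(1-\varepsilon)^{2m}<\infty$. This same convergence justifies termwise differentiation, because the differentiated series converges uniformly on the region where $V\le(1-\varepsilon)U$. The result is \eqref{eq:separated-derivative}, with $C_{\alpha,\varepsilon}$ coming from that geometric sum.
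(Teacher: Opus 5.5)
Your overall plan is the same as the paper's: expand in $(V/U)^2$, note that $V^{2m}=4^mA^mB^m$ is a polynomial, apply Leibniz to $U^{\beta}A^mB^m$, and sum a geometric series in $m$. But the exponent count, which is the whole content of the lemma, does not work as you have written it.

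Take the Leibniz term in which $\partial_s^2$ falls on $A^m$ and $\partial_t^2$ on $U^\beta$. It contains the piece $16m(m-1)\beta(\beta-1)s^4A^{m-2}B^mU^{\beta-2}$. Your bound $A^{m-2}B^m\le (AB)^{m-2}\le((1-\varepsilon)U/2)^{2m-4}$ then gives only $U^{\alpha/2-2m-2}\cdot U^{2m-4}=U^{\alpha/2-6}$.

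In general, put $i$ derivatives on $A^m$ and $k$ on $B^m$. The derivatives of $U^\beta$ cost up to $U^{-(4-i-k)}$. Your estimate of $A^{m-i}B^{m-k}$ costs $U^{-2\max(i,k)}$ relative to $U^{2m}$. The total is $U^{-4-|i-k|}$, which is too weak whenever $i\ne k$.

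The culprit is your sentence ``a removed factor costs at most $U^{-1}$ \dots because the remaining single factor is $\le 1$.'' Discarding the leftover $B\le 1$ wastes a full power of $U$. This matters exactly where the lemma is applied: near a pole $U\asymp(r_k/M)^2$ is small, and $A,B$ are small too. The same problem already appears at $m=1$. For example, the term $(\partial_sA)\,B\,\partial_s\partial_t^2U^{\alpha/2-2}$ has size about $B\,U^{\alpha/2-5}$, and $B\le1$ does not suffice there.

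The missing ingredient is the elementary pair of inequalities $A=1-s^2\le U$ and $B=1-t^2\le U$. They follow from $U-A=(s-t)^2+(1-t^2)\ge 0$ and its symmetric counterpart. With them, for $m\ge2$ you write
\[
4^mA^{m-i}B^{m-k}=16\,V^{2m-4}A^{2-i}B^{2-k}\le 16(1-\varepsilon)^{2m-4}U^{2m-i-k}.
\]
So each derivative landing on $A^m$ or $B^m$ costs exactly one power of $U$. Combined with the at most $4-i-k$ powers lost on $U^\beta$, and using $U\le4$ to absorb any surplus, this gives $U^{\alpha/2-4}$. That is what the paper does.

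There are also two smaller points.
\begin{itemize}
\item The terms with $m\le1$ have to be treated directly, again using $A,B\le U$. Your inequality $(AB)^{m-\max(i,k)}\le((1-\varepsilon)U/2)^{2m-2\max(i,k)}$ goes the wrong way when the exponent is negative, since the hypothesis only bounds $AB$ from above.
\item The bound $U^{\beta-\max(a,b)}$ in your first bullet is false: $\partial_s\partial_tU^\beta$ contains $4\beta(\beta-1)st\,U^{\beta-2}$. Only your fallback $U^{\beta-a-b}$ is valid, and that is the one the count needs.
\end{itemize}
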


\begin{proof}
Recall that
\[
F_\alpha(s,t)
=
\frac1\pi\int_0^\pi
\bigl(U-V\cos\theta\bigr)^{\alpha/2}\,d\theta .
\]
Since $\frac{V}{U}\le1-\varepsilon,$
the binomial series converges uniformly, and we may integrate it term by
term. Since the odd powers of $\cos\theta$ have zero integral, we obtain
\[
F_\alpha(s,t)
=
U^{\alpha/2}
\sum_{m=0}^\infty
a_m
\left(\frac{V}{U}\right)^{2m},
\]
where
\[
a_m
=
\binom{\alpha/2}{2m}c_m,
\qquad
c_m
=
\frac1\pi\int_0^\pi\cos^{2m}\theta\,d\theta.
\]
Observe that the coefficients $a_m$ are easily shown to be uniformly bounded in $m$. We write the $m$-th term of the sum above as
\[
T_m(s,t)
=
a_m4^m
(1-s^2)^m(1-t^2)^m
U^{\alpha/2-2m}.
\]
Every term produced by applying
$\partial_s^2\partial_t^2$ to $T_m$ is bounded in absolute value by an expression of the
form
\[
C_\alpha(1+m)^4
4^m
(1-s^2)^{m-a}
(1-t^2)^{m-b}
U^{\alpha/2-2m-c},
\]
where
\[
0\le a\le2,\qquad
0\le b\le2,\qquad
a+b+c\le4.
\]
Indeed, the factor $(1+m)^4$ accounts for the coefficients arising when
the three powers are differentiated a total of four times. Notice also
that all derivatives of $U=2-2st$
which occur here are uniformly bounded. Set
\[
A=1-s^2,\qquad B=1-t^2.
\]
We have
\[
A\le U,\qquad B\le U,
\]
because, for instance,
\[
U-A
=
1+s^2-2st
=
(s-t)^2+1-t^2
\ge0,
\]
and similarly for $B$. For $m\ge2$,
\begin{align*}
4^mA^{m-a}B^{m-b}
&=
16(4AB)^{m-2}A^{2-a}B^{2-b}=
16V^{2m-4}A^{2-a}B^{2-b}\\
&\le
C(1-\varepsilon)^{2m-4}
U^{2m-4}U^{4-a-b}=
C(1-\varepsilon)^{2m-4}
U^{2m-a-b}.
\end{align*}
Hence
\begin{align*}
\left|
\partial_s^2\partial_t^2T_m(s,t)
\right|
&\le
C_\alpha(1+m)^4
(1-\varepsilon)^{2m-4}
U^{\alpha/2-a-b-c}\\
&\le
C_\alpha(1+m)^4
(1-\varepsilon)^{2m-4}
U^{\alpha/2-4},
\end{align*}
where in the last step we used $a+b+c\le4$ and $U\le4$.

Therefore
\[
\sum_{m=2}^{\infty}
\left|
\partial_s^2\partial_t^2T_m(s,t)
\right|
\le
C_{\alpha,\varepsilon}U^{\alpha/2-4},
\]
since
\[
\sum_{m=2}^\infty
(1+m)^4(1-\varepsilon)^{2m-4}<\infty.
\]
This also justifies term-by-term differentiation of the series.
The same argument gives locally uniform convergence for all
partial derivatives of total order at most four, since $U$ is locally
bounded below. At boundary points, the power series in
$4(1-s^2)(1-t^2)/U^2$ defines a smooth extension to a neighborhood,
where the absolute value of this ratio remains less than one.

It remains only to consider $m=0$ and $m=1$. These terms are
\[
T_0=a_0U^{\alpha/2},
\qquad
T_1=4a_1AB\,U^{\alpha/2-2}.
\]
A direct differentiation, using again
$A,B\le U$ and $U\le4,$
gives
\[
\left|\partial_s^2\partial_t^2T_0\right|
+
\left|\partial_s^2\partial_t^2T_1\right|
\le
C_\alpha U^{\alpha/2-4}.
\]
Combining this with the estimate for $m\ge2$ proves
\eqref{eq:separated-derivative}.
\end{proof}

We now prove \eqref{eq:comparableblock}, so we assume that $r_j/8\leq r_k\leq 8r_j$. Put
\[
 \ell=|j-k|,
 \qquad
 r=r_j\asymp r_k.
\]
\noindent We distinguish several cases:
\begin{itemize}
    \item $\{j,k\}\cap\{1,2M-1\}\ne\varnothing$. By interchanging indices and reflecting across the equator, it suffices to take $j=1$. Then $r_j=4$, $r_k\le32$, and $r_M\ge4M\ge64$, so either $k\le8$ or $2M-k\le8$.
In the first case both bands lie near the north pole, and
$\phi+\psi\lesssim M^{-1}$.
Hence,
\[
D(s,t,\theta)^2\underset{\eqref{eq:distance-comparison}}\lesssim (\phi-\psi)^2+\phi\psi\lesssim (\phi+\psi)^2\lesssim M^{-2}
\underset{\eqref{eq:F-polar-average}}{\Rightarrow}\sup_{B_j\times B_k}F_\alpha(s,t)\lesssim_\alpha M^{-\alpha},
\]
and therefore, using \(\|\mu_j\|_{\rm TV}\le 2r_j\) and the same bound for $k$, we have proved:
\[
\left|
\int_{B_j}\int_{B_k}
F_\alpha(s,t)\,d\mu_k(t)d\mu_j(s)
\right|
\lesssim_\alpha M^{-\alpha},
\]
which implies \eqref{eq:comparableblock}.
In the second case the bands lie near opposite poles. On the whole rectangle,
$s\ge H_1=1-8/N\ge127/128$ and $t\le0$. Thus $2\le U\le4$ and
$V/U\le\sqrt{1-s^2}\le1/2$.
Lemma~\ref{lem:separated-derivative} and \eqref{eq:two-variable-taylor} give
\[
 \left|\iint F_\alpha\,d\mu_jd\mu_k\right|
 \lesssim_\alpha \frac{r_j^3r_k^3}{M^8}
 \lesssim_\alpha M^{-8}.
\]
Here $\ell\asymp M$, so this is stronger than the required bound of order $M^{-3}$.

\item $j,k\not\in\{1,2M-1\}$, so that \eqref{eq:angular-band-scales} gives
$\sin\phi\asymp\sin\psi\asymp r/M$. We consider three subcases:
\begin{itemize}
    \item[$\star$] If $3\le\ell\le  2r$, by \eqref{eq:angular-separation-bands},
\[
 |\phi-\psi|\asymp\frac\ell M\lesssim \frac{r}{M}\underset{\eqref{eq:distance-comparison}}{\Rightarrow} D^2\gtrsim\frac{\ell^2}{M^2}+\frac{r^2}{M^2}\theta^2
\]
Then,
splitting the elementary integrals at $\theta=\ell/r$, one obtains
\begin{align*}
 \int_0^\pi D^{\alpha-4}\,d\theta
 &\lesssim_\alpha \frac{\ell^{\alpha-3}}{rM^{\alpha-4}},\\
 \int_0^\pi D^{\alpha-3}\,d\theta
 &\lesssim_\alpha \frac{\ell^{\alpha-2}}{rM^{\alpha-3}},\\
 \int_0^\pi D^{\alpha-2}\,d\theta
 &\lesssim_\alpha\begin{cases}
 \frac{M}{r}\left(\frac{\ell}{M}\right)^{\alpha-1}&0<\alpha<1\\
 \frac{M}{r}\log\left(2+\frac{r}{\ell}\right)&\alpha=1\\
 \left(\frac{r}{M}\right)^{\alpha-2}&1<\alpha<2
 \end{cases}
\end{align*}
Substitution in \eqref{eq:height-derivatives} and interchange of 
integration and derivative in \eqref{eq:F-polar-average} give
\begin{align}\label{eq:local-mixed-derivative}
 \sup_{B_j\times B_k}
 |\partial_s^2\partial_t^2F_\alpha(s,t)|=&\sup_{B_j\times B_k}\left|\frac1\pi\int_0^\pi \partial_s^2\partial_t^2D(s,t,\theta)^\alpha \,d\theta\right|
  \\
  \lesssim_\alpha&
  \sup_{B_j\times B_k}\int_0^\pi\Biggl(
  \frac{D^{\alpha-4}}{\sin^2\phi\sin^2\psi}
  +\frac{D^{\alpha-3}}{\sin^2\phi\sin^3\psi}\nonumber\\
  &\qquad\qquad\quad
  +\frac{D^{\alpha-3}}{\sin^3\phi\sin^2\psi}
  +\frac{D^{\alpha-2}}{\sin^3\phi\sin^3\psi}
  \Biggr)\,d\theta\nonumber\\
  \lesssim_{\alpha}& \frac{\ell^{\alpha-3}M^8}{r^5M^\alpha}\nonumber.
\end{align}
Indeed, relative to the first term, the other contributions are
bounded by constants times $\ell/r$ and, respectively,
$(\ell/r)^2$, $(\ell/r)^2\log(2+r/\ell)$, or
$(\ell/r)^{3-\alpha}$; all are bounded for $0<\ell/r\le2$.
Combining \eqref{eq:two-variable-taylor} and \eqref{eq:local-mixed-derivative}, we find
\begin{align*}
 \left|
 \int_{B_j}\int_{B_k}F_\alpha\,d\mu_jd\mu_k
 \right|
 \lesssim_\alpha& \frac{r\ell^{\alpha-3}}{M^\alpha},
\end{align*}
proving \eqref{eq:comparableblock}.
\item[$\star$]
If $\ell\le2$, note from \eqref{eq:F-polar-average} and \eqref{eq:distance-comparison} that
\[
\int_0^{1/r}D^\alpha\,d\theta\lesssim r^{-1}M^{-\alpha},
\]
which, together with the bound on the total variation of $\mu_j$ and $\mu_k$, implies
\[
\left| \int_{B_j}\int_{B_k}F_\alpha\,d\mu_jd\mu_k
 \right|
 \lesssim rM^{-\alpha}+
 \left|
\int_{B_j}\int_{B_k}\int_{1/r}^\pi D(s,t,\theta)^\alpha\,d\theta\,d\mu_jd\mu_k
 \right|
\]
With the restriction $1/r\le\theta\le\pi$, the integrand is smooth. Repeating the
calculation leading to \eqref{eq:local-mixed-derivative} gives
\[
 \sup_{B_j\times B_k}
 \left|\partial_s^2\partial_t^2
 \left(\frac1\pi\int_{1/r}^\pi D^\alpha\,d\theta\right)
 \right|
 \lesssim_\alpha \frac{M^8}{r^5M^\alpha}\underset{\eqref{eq:two-variable-taylor}}{\Rightarrow}
 \left|
\int_{B_j}\int_{B_k}\int_{1/r}^\pi D(s,t,\theta)^\alpha\,d\theta\,d\mu_jd\mu_k
 \right|\lesssim_\alpha \frac{r}{M^\alpha}
\]
This proves \eqref{eq:comparableblock} for neighboring bands.

\item[$\star$] If $\ell>2r$, using \eqref{eq:angular-separation-bands} we have the following inequality:
\begin{equation}\label{eq:UmV}
 U-V=2(1-\cos(\phi-\psi))\asymp\frac{\ell^2}{M^2}.
\end{equation}
In particular, $U\gtrsim \ell^2M^{-2}$. Besides, $V=2\sin\phi\sin\psi\lesssim r^2/M^2$. Hence, we have,
\[
U=(U-V)+V\lesssim \frac{\ell^2+r^2}{M^2}\lesssim\frac{\ell^2}{M^2},\text{ and hence }U\asymp \frac{\ell^2}{M^2},
\]
which together with \eqref{eq:UmV} implies $V\le(1-\varepsilon)U$ for some fixed constant $\varepsilon>0$. Hence \eqref{eq:separated-derivative} and
\eqref{eq:two-variable-taylor} yield
\begin{align*}
 \left|
 \int_{B_j}\int_{B_k}F_\alpha\,d\mu_jd\mu_k
 \right|
 &\lesssim_\alpha
 \frac{r^6}{M^8}
 \left(\frac\ell M\right)^{\alpha-8}\lesssim\frac{r\ell^{\alpha-3}}{M^\alpha}.
\end{align*}
Thus \eqref{eq:comparableblock} is also proved  this case.
\end{itemize}
\end{itemize}
We next prove \eqref{eq:farblock}. Suppose, for example, that
$j,k<M$ (the other hemisphere is identical) and $r_k>8r_j$, which readily implies $k>8j\geq3$, $j\leq M/2\leq \sqrt{N}/4$, $1-H_{k-1}\gtrsim (r_k/M)^2$ and $1-H_j\leq k^2/N$. The elementary
band geometry gives
\[
 \qquad
 U\ge 2-2H_{j-1}H_{k-1}\ge 2-2H_{k-1}\gtrsim\left(\frac{r_k}{M}\right)^2.
\]
By writing the value of each term, it is an exercise to prove that
\[
8\left(\left(1-\frac{k^2}{N}\right)-H_{k-1}\right)-\left(1-\left(1-\frac{k^2}{N}\right)H_{k}\right)\geq 0,\quad 
\]
Using that $x\mapsto 8(x-H_{k-1})-(1-xH_{k})$ is an increasing function of $x$, this implies
\[
8(H_j-H_{k-1})-(1-H_{j}H_{k})\geq 0,\quad 
\]
and subsequently
\[
8(s-t)-(1-st)\geq0,\quad s\in B_j,\; t\in B_k,
\]
which is algebraically equivalent to:

\[
V\leq\frac{\sqrt{63}}{8}U.
\]
Therefore, \eqref{eq:two-variable-taylor} and 
\eqref{eq:separated-derivative} yield
\[
 \left|
\int_{B_j}\int_{B_k}F_\alpha\,d\mu_jd\mu_k
 \right|
 \lesssim_\alpha \frac{r_j^3r_k^3}{M^8}
 \left(\frac{r_k}{M}\right)^{\alpha-8}
 =\frac{r_j^3}{M^\alpha r_k^{5-\alpha}}.
\]

\noindent For the pairs in \eqref{eq:oppositeblock}, note that if $M/2<j\leq M-1$ then $r_j\geq 2M-4$ and since $r_M\leq 12M+3$, we have $r_M\leq 8r_j$ whenever $M\geq16$. Hence, the scales are comparable, a case that has already been covered in \eqref{eq:comparableblock}. We can thus assume by symmetry that $j\leq M/2$, which implies $H_j\geq1/2$. More precisely,
$s\ge H_j\ge3/4-1/(2M)\ge23/32>2/3$.
For opposite noncentral bands $t\le0$, while for the equatorial band
$t\le r_M/N\le(12M+3)/(4M^2)\le195/1024<1/5$.
By monotonicity,
\[
 \frac{s-t}{1-st}\ge\frac7{13},\qquad
 \frac VU=\sqrt{1-\left(\frac{s-t}{1-st}\right)^2}
 \le\frac{\sqrt{120}}{13}<\frac{15}{16}.
\]
We therefore have $\frac85\leq U\leq 4$
and $V\le15U/16$. Again, applying
\eqref{eq:two-variable-taylor} and \eqref{eq:separated-derivative} gives
\[
 \left|
 \int_{B_j}\int_{B_k}F_\alpha\,d\mu_jd\mu_k
 \right|
 \lesssim_\alpha\frac{r_j^3r_k^3}{M^8}.
\]
This proves the lemma.

\subsection*{AI usage disclosure}
We have used LLMs to refine the computations, revise the correctness of the results and assistance with the Lean formalization. The authors take responsibility of all the content. 

The Lean formalization is available at the GitHub repository:
\href{https://github.com/joaquimortega/Rieszenergy}{Rieszenergy}.
The repository contains the proof blueprint, the Lean sources, a standalone
proof file, and the verification records. All blueprint obligations are
proved, and the public statements have been checked by Comparator and
Lean's kernel.

\end{document}